\newtheorem{thm}{Theorem}[section]
\newtheorem{lemma}[thm]{Lemma}
\newtheorem{prop}[thm]{Proposition}
\newtheorem{cor}[thm]{Corollary}
\newtheorem{claim}[thm]{Claim}
\theoremstyle{definition}
\newtheorem{define}[thm]{Definition}
\begin{document}

\title{Exponentially Many 4-List-Colorings of Triangle-Free Graphs on Surfaces}
\author{Tom Kelly\thanks{Department of Combinatorics and Optimization, University of Waterloo, 200 University Ave West, Waterloo, Ontario, Canada N2L 3G1.}
\thanks{Email: \texttt{t9kelly@uwaterloo.ca}}
\and
Luke Postle\footnotemark[1]
\thanks{Partially supported by NSERC under Discovery Grant No. 2014-06162.  Email: \texttt{lpostle@uwaterloo.ca}}
}

\date{February 15, 2016}
 \maketitle

\begin{abstract}
Thomassen proved that every planar graph $G$ on $n$ vertices has at least $2^{n/9}$ distinct $L$-colorings if $L$ is a 5-list-assignment for $G$ and at least $2^{n/10000}$ distinct $L$-colorings if $L$ is a 3-list-assignment for $G$ and $G$ has girth at least five.  Postle and Thomas proved that if $G$ is a graph on $n$ vertices embedded on a surface $\Sigma$ of genus $g$, then there exist constants $\epsilon,c_g > 0$ such that if $G$ has an $L$-coloring, then $G$ has at least $c_g2^{\epsilon n}$ distinct $L$-colorings if $L$ is a 5-list-assignment for $G$ or if $L$ is a 3-list-assignment for $G$ and $G$ has girth at least five.  More generally, they proved that there exist constants $\epsilon,\alpha>0$ such that if $G$ is a graph on $n$ vertices embedded in a surface $\Sigma$ of fixed genus $g$, $H$ is a proper subgraph of $G$, and $\phi$ is an $L$-coloring of $H$ that extends to an $L$-coloring of $G$, then $\phi$ extends to at least $2^{\epsilon(n - \alpha(g + |V(H)|))}$ distinct $L$-colorings of $G$ if $L$ is a 5-list-assignment or if $L$ is a 3-list-assignment and $G$ has girth at least five.  We prove the same result if $G$ is triangle-free and $L$ is a 4-list-assignment of $G$, where $\epsilon=\frac{1}{8}$, and $\alpha= 130$.
\end{abstract}

\section{Introduction}

Let $G$ be a graph with $n$ vertices, and let $L = (L(v) : v\in V(G))$ be a collection of lists which we call {\em available colors}.  If each set $L(v)$ is non-empty, then we say that $L$ is a {\em list-assignment} for $G$.  If $k$ is an integer and $|L(v)|\geq k$ for every $v\in V(G)$, then we say that $L$ is a {\em $k$-list-assignment} for $G$.  An {\em $L$-coloring} of $G$ is a mapping $\phi$ with domain $V(G)$ such that $\phi(v)\in L(v)$ for every $v\in V(G)$ and $\phi(v)\neq\phi(u)$ for every pair of adjacent vertices $u,v\in V(G)$.  We say that a graph $G$ is {\em $k$-choosable}, or {\em $k$-list-colorable}, if $G$ has an $L$-coloring for every $k$-list-assignment $L$.  If $L(v) = \{1,\dots, k\}$ for every $v\in V(G)$, then we call an $L$-coloring of $G$ a {\em $k$-coloring}, and we say $G$ is {\em $k$-colorable} if $G$ has a $k$-coloring.

If $G$ has an $L$-coloring, it is natural to ask how many $L$-colorings $G$ has.  In particular, we are interested in when the number of $L$-colorings of $G$ is exponential in the number of vertices.  The Four Color Theorem states that every planar graph has a 4-coloring.  A plane graph obtained from the triangle by recursively adding vertices of degree three inside facial triangles has only one 4-coloring up to permutation of the colors.  So in general planar graphs do not have exponentially many 4-colorings.  However, if $\phi$ is a $k$-coloring of $G$, then we may assume there is some $X\subseteq V(G)$ with $|X|\geq |V(G)|/k$ such that for all $v\in X$, $\phi(v)=1$.  It follows that $G$ has at least $2^{|V(G)|/k}$ $(k+1)$-colorings, because for each subset of $X$, we can obtain a unique $(k+1)$-coloring of $G$ from $\phi$ by coloring it with the color $k+1$.  Hence, planar graphs have exponentially many 5-colorings.  In \cite{BL46}, Birkhoff and Lewis obtained an optimal bound on the number of 5-colorings of planar graphs, which is tight for the graph described above.

\begin{thm}\cite{BL46}
Every planar graph on $n \geq 3$ vertices has at least $60\cdot 2^{n-3}$ distinct 5-colorings
\end{thm}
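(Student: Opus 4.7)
The plan is to prove by strong induction on $n$ the bound $P(G,5) \geq 60 \cdot 2^{n-3}$ for every planar graph $G$ on $n \geq 3$ vertices, where $P(G,k)$ denotes the chromatic polynomial. Since adding edges only decreases the number of proper colorings, it suffices to handle planar triangulations. The base case $n=3$ is $G = K_3$, where $P(K_3,5) = 5 \cdot 4 \cdot 3 = 60 = 60 \cdot 2^0$.

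For the inductive step, let $G$ be a planar triangulation on $n \geq 4$ vertices. By Euler's formula $G$ contains a vertex $v$ with $3 \leq \deg(v) \leq 5$. If $\deg(v) = 3$, the three neighbors of $v$ induce a triangle, so they use three distinct colors in any 5-coloring; applying induction to the planar graph $G - v$ yields at least $60 \cdot 2^{n-4}$ such colorings, each extending to $v$ in at least $5 - 3 = 2$ ways, giving the bound.

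The cases $\deg(v) \in \{4, 5\}$ are more delicate, since naive deletion of $v$ may leave only one valid extension color. In the triangulation, the neighbors of $v$ form a cycle $C$ of length $\deg(v)$, and since $v$ together with $C$ would otherwise contain a non-planar subgraph, at least one chord $uw$ of $C$ must be absent. Stratifying the 5-colorings of $G - v$ by whether $u$ and $w$ receive the same color, one obtains an inequality of the form
\[ P(G, 5) \geq a \cdot P(G - v, 5) + b \cdot P((G - v)/uw, 5) + \cdots, \]
with additional terms available when several non-adjacent pairs of neighbors can be identified. For example, when both chords of a 4-cycle around $v$ are missing, identifying each non-adjacent pair separately and summing the gains at $v$ produces exactly the required factor after invoking the inductive hypothesis on the planar graphs $G - v$, $(G-v)/uw$, and $(G-v)/bd$.

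The main obstacle is the subcases where only one chord of $C$ is missing and the full $\deg(v) = 5$ analysis, where the 5-wheel admits many chord configurations and the naive two-case split is insufficient. In these situations one applies the deletion-contraction recursion $P(G, k) = P(G-e, k) - P(G/e, k)$ together with a careful classification of colorings on the neighborhood of $v$ by the number of distinct colors used on $C$, in the spirit of Birkhoff and Lewis's original chromatic reduction argument, in order to collect the needed factor of $2$ per added vertex on average.
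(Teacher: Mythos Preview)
The paper does not prove this theorem at all; it merely quotes it with a citation to Birkhoff and Lewis \cite{BL46}. So there is no ``paper's own proof'' to compare against.

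As for your proposal on its own merits: it is not a proof but a partial sketch. The reduction to triangulations and the base case are fine, and the $\deg(v)=3$ case is correctly handled. But for $\deg(v)\in\{4,5\}$ you do not actually carry out the argument. You note that identifying non-adjacent neighbours and invoking induction on the contracted graphs is the right shape of idea, and then for the genuinely hard subcases (degree~4 with one chord present, and all the degree~5 configurations) you defer to ``the spirit of Birkhoff and Lewis's original chromatic reduction argument.'' Since the statement you are trying to prove \emph{is} the Birkhoff--Lewis theorem, this is circular: you are invoking the very result you set out to establish. A complete proof along these lines requires a full case analysis of the colour patterns on the neighbour cycle $C$ together with explicit inequalities relating $P(G,5)$ to $P$ evaluated on several smaller planar graphs, and verifying that the recursion closes with the constant $60\cdot 2^{n-3}$; none of that work is present here.
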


In \cite{Th06}, Thomassen proved a similar result for graphs on surfaces.

\begin{thm}\label{surface coloring}\cite{Th06}
For every surface $\Sigma$ there is some constant $c>0$ such that every 5-colorable graph on $n$ vertices embedded in $\Sigma$ has at least $c\cdot 2^n$ distinct 5-colorings.\end{thm}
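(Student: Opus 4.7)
The plan is to prove Theorem~\ref{surface coloring} by induction on the Euler genus $g$ of $\Sigma$, strengthening the inductive hypothesis to an $L$-coloring extension statement so that the induction carries through cleanly. Specifically, I would establish: there exist constants $c_g, \alpha > 0$ such that for every graph $G$ embedded in a surface of Euler genus at most $g$, every 5-list-assignment $L$, every subgraph $H\subseteq G$, and every $L$-coloring $\phi$ of $H$ that extends to an $L$-coloring of $G$, $\phi$ extends to at least $c_g\cdot 2^{n - \alpha|V(H)|}$ distinct $L$-colorings of $G$. Theorem~\ref{surface coloring} then follows by taking $H=\emptyset$ and $L(v)=\{1,\dots,5\}$ for every $v\in V(G)$, since the hypothesis that $G$ is 5-colorable supplies a valid $L$-coloring.

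The base case $g=0$ would be handled by a planar 5-list-coloring extension theorem (the list-coloring strengthening of Thomassen's planar 5-choosability argument), which supplies exponentially many extensions with the required linear dependence on $|V(H)|$. For the inductive step, I would split into two cases on the edge-width $\mathrm{ew}(G)$ of $G$ in $\Sigma$. If $\mathrm{ew}(G)\le K$ for some constant $K=K(g)$, then there is a non-contractible cycle $C$ of length at most $K$; cutting $\Sigma$ along $C$ produces a surface $\Sigma'$ of strictly smaller Euler genus in which $G-V(C)$ naturally embeds. Because the known $L$-coloring of $G$ already supplies colors on $V(C)$, I would augment $H$ by $V(C)$ and apply the inductive hypothesis on $\Sigma'$ with the lists restricted by the colors chosen on $V(C)$, the additive cost of $\alpha|V(C)|\le\alpha K$ being a constant absorbed into $c_g$. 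If $\mathrm{ew}(G)>K$, then $G$ is sufficiently locally planar that one can isolate a planar subregion large enough to apply the base case directly, and then iterate on what remains.

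The main obstacle is the strength of the base case: one needs a planar list-coloring extension theorem delivering $2^{\epsilon n - \alpha|V(H)|}$ colorings with $\alpha$ independent of both $n$ and $H$, so that repeated application through the genus induction does not inflate the loss. Producing such a bound requires a delicate refinement of Thomassen's short proof of planar 5-choosability, carefully tracking how a precoloring on an arbitrary subgraph propagates through the chordal-path reduction. A secondary obstacle is handling the list constraints produced by the cut along $C$ — vertices adjacent to $V(C)$ have strictly smaller lists — which is the technical reason for carrying the induction on the list-coloring extension statement rather than on bare 5-colorings in the first place.
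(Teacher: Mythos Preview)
The paper does not prove Theorem~\ref{surface coloring}; it is quoted from Thomassen~\cite{Th06} as background, and the paper merely remarks that Thomassen gave a short argument via Euler's formula. So there is no in-paper proof to compare against.

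That said, your proposal has a genuine quantitative gap. The statement asserts at least $c\cdot 2^n$ distinct $5$-colorings, i.e.\ the exponent on $2$ is exactly $n$. Your inductive hypothesis promises $c_g\cdot 2^{\,n-\alpha|V(H)|}$ extensions for an arbitrary $5$-list-assignment, and you intend to supply the planar base case via a list-coloring extension theorem. But the best known planar $5$-list bound is Thomassen's $2^{n/9}$ (Theorem~\ref{planar list 5}); no planar $5$-list result delivers exponent $n$. Thus the base case you need is simply not available, and the surface-cutting induction you outline would yield at best $c_g\cdot 2^{\epsilon n}$ for some $\epsilon<1$ --- in effect Theorem~\ref{surface lists}, which is strictly weaker than Theorem~\ref{surface coloring}.

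The full-strength $2^n$ bound genuinely depends on working with ordinary (non-list) colorings, where one has access to the chromatic polynomial and the Birkhoff--Lewis bound $P(G,5)\ge 60\cdot 2^{\,n-3}$ for planar $G$. Thomassen's argument in~\cite{Th06} exploits that setting; by routing through list-coloring you surrender exactly the leverage needed to reach the sharp exponent. If you want to salvage the cut-and-induct strategy for the theorem as stated, you would need a planar \emph{ordinary} $5$-coloring extension result of the form $c\cdot 2^{\,n-\alpha|V(H)|}$, and you would have to argue the induction without passing to lists on the cut vertices.
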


In \cite[Theorem 2.1]{Th06}, Thomassen gave a shorter proof using Euler's formula that for every fixed surface $\Sigma$, if a graph $G$ embedded in $\Sigma$ is 5-colorable, then it has exponentially many 5-colorings.  The argument also applies to 4-colorings of triangle-free graphs and 3-colorings of graphs of girth at least five.  We are interested in finding similar results for list-coloring.

In \cite{Th94}, Thomassen gave his classic proof that every planar graph is 5-choosable.  Later, Thomassen proved that in fact every planar graph has exponentially many 5-list-colorings.

\begin{thm}\label{planar list 5}\cite{Th07a}
If $G$ is a planar graph on $n$ vertices and $L$ is a 5-list-assignment for $G$, then $G$ has at least $2^{n/9}$ distinct $L$-colorings.\end{thm}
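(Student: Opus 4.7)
The plan is to adapt Thomassen's celebrated proof that planar graphs are $5$-choosable from \cite{Th94}, strengthening the induction so that it outputs a lower bound on the \emph{number} of $L$-colorings rather than merely one $L$-coloring. Concretely, I would prove by induction on $|V(G)|$ the following stronger lemma: let $G$ be a plane graph whose outer face is bounded by a cycle $C$, let $xy\in E(C)$ be precolored with distinct colors $c_x,c_y$, and let $L$ be a list-assignment with $|L(v)|\geq 3$ for $v\in V(C)\setminus\{x,y\}$ and $|L(v)|\geq 5$ for interior $v$. Then $G$ has at least $2^{(|V(G)|-|V(C)|)/9}$ distinct $L$-colorings extending the precoloring. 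Theorem \ref{planar list 5} then follows by reducing to the $2$-connected case (so that the outer face is bounded by a cycle, using that the list-colorings factor multiplicatively across blocks), picking any edge of $C$, assigning it a valid pair of colors, and carrying out a short accounting step to absorb the $-|V(C)|$ term in the exponent.

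The inductive step mirrors Thomassen's original case analysis. In the \emph{chord case}, if $C$ has a chord $uv$, then $uv$ splits $G$ into plane subgraphs $G_1\supseteq\{xy\}$ and $G_2$ meeting along $uv$. Induction applied to $G_1$ yields at least $2^{(|V(G_1)|-|V(C_1)|)/9}$ extensions, each of which fixes colors on $\{u,v\}$; induction applied to $G_2$ with $uv$ as the new precolored edge then yields $2^{(|V(G_2)|-|V(C_2)|)/9}$ further extensions. Using the identities $|V(G_1)|+|V(G_2)|=|V(G)|+2$ and $|V(C_1)|+|V(C_2)|=|V(C)|+2$, the exponents add up to $(|V(G)|-|V(C)|)/9$, matching the desired bound.

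In the \emph{chordless case}, pick $v_1\in V(C)$ adjacent to $x$ and distinct from $y$, and enumerate its neighbors in cyclic order as $x,u_1,\dots,u_k,w$, where $w\in V(C)$ and each $u_i$ is interior (using chordlessness of $C$). Choose two colors $\alpha,\beta\in L(v_1)\setminus\{c_x\}$, which is possible since $|L(v_1)|\geq 3$, delete $v_1$, and replace the lists by $L'(u_i)=L(u_i)\setminus\{\alpha,\beta\}$ so that each $u_i$ has $|L'(u_i)|\geq 3$ on the new outer cycle $C'$. Applying the lemma inductively to $G-v_1$ with list-assignment $L'$ gives at least $2^{(|V(G)|-1-|V(C')|)/9}$ colorings; each extends to an $L$-coloring of $G$ by assigning $v_1$ either $\alpha$ or $\beta$ (at least one is compatible with $w$). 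Since $|V(G)|-1-|V(C')|=|V(G)|-|V(C)|-k$, the induction closes provided we extract a multiplicative factor of $2^{k/9}$ from the two-color slack at $v_1$, combined if necessary with iterating the reduction over several choices of $\{\alpha,\beta\}$.

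The principal technical obstacle is precisely this chordless-case bookkeeping: turning the limited freedom at $v_1$ into the required multiplicative factor of $2^{k/9}$, and calibrating the constants so that $1/9$ is the correct exponent in all sub-cases. Secondary difficulties include handling low-degree configurations (e.g.\ $v_1$ of degree $2$, or $w=y$ forcing $C$ to be a triangle), cases where the outer face of $G-v_1$ fails to be bounded by a cycle and must be restored via a small modification or a separate chord-type reduction, and the final verification that the lemma, combined with a sum over starting precolorings, yields exactly the $2^{n/9}$ bound asserted in Theorem \ref{planar list 5}.
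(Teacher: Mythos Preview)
The paper does not prove Theorem~\ref{planar list 5}; it is quoted from Thomassen~\cite{Th07a} as background, so there is no in-paper proof to compare against. Your outline is indeed the shape of Thomassen's argument in~\cite{Th07a}, but what you have written is a plan with an acknowledged hole rather than a proof.

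The chord case is fine: the exponents add correctly. The chordless case, however, is exactly where the content lies, and your sketch does not close it. Removing $v_1$ turns $k$ interior vertices into boundary vertices, so the inductive bound drops by a factor of $2^{k/9}$, while the freedom at $v_1$ supplies at most a factor of $2$ (and sometimes only $1$, when the color of $w$ kills one of $\alpha,\beta$). For $k\geq 10$ this is already insufficient, and your suggested remedy of ``iterating the reduction over several choices of $\{\alpha,\beta\}$'' does not work as stated: different pairs $\{\alpha,\beta\}$ yield different reduced list-assignments $L'$ on $G-v_1$, so the resulting colorings need not be distinct, and you have at most $\binom{3}{2}=3$ such pairs anyway. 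Thomassen's actual argument in~\cite{Th07a} requires a genuinely sharper inductive statement and a more delicate case analysis to manufacture the missing factor; identifying that strengthening is the real work, and it is absent here.
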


In \cite{Th03}, Thomassen proved that every planar graph of girth at least five is 3-choosable.  Later, he proved that in fact every planar graph of girth at least 5 has exponentially many 3-list-colorings.

\begin{thm}\label{planar list 3}\cite{Th07b}
If $G$ is a planar graph on $n$ vertices of girth at least 5 and $L$ is a 3-list-assignment for $G$, then $G$ has at least $2^{n/10000}$ distinct $L$-colorings.\end{thm}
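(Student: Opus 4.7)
The plan is to adapt Thomassen's original proof that planar graphs of girth at least five are 3-choosable \cite{Th03} so that it counts colorings rather than merely producing one, proceeding by strong induction on $n$ with a substantially strengthened inductive hypothesis.

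First, I would formulate an auxiliary statement of roughly the following shape: let $G$ be a plane graph of girth at least five whose outer face is bounded by a cycle $C$ of length at most some fixed constant, let $L$ be a list-assignment with $|L(v)|\geq 3$ for every interior vertex, and let $\phi$ be a proper precoloring of a specified subset $S \subseteq V(C)$ of bounded size, subject to local conditions on the lists of vertices near $C$ of the type used in \cite{Th03} to force extendability. The quantitative conclusion would be that the number of extensions of $\phi$ to an $L$-coloring of $G$ is at least $2^{(n - f(|V(C)|,|S|))/10000}$ for some explicit affine function $f$. Theorem \ref{planar list 3} then follows by applying the auxiliary statement to any facial 5-cycle with no vertex precolored and absorbing the additive constant.

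The inductive step would proceed by a case analysis mirroring that of \cite{Th03}. In each case one identifies a small reducible configuration near the outer face, deletes or identifies a few vertices to produce a smaller plane graph $G'$ with an adjusted list-assignment and precoloring, and checks that every valid coloring of the smaller instance lifts to a valid coloring of $G$. Whenever the reduction leaves a deleted vertex whose residual list still contains at least two colors not used by its already-colored neighbors in $G'$, we gain an independent binary choice, contributing a factor of $2$ to the count. Amortized against the vertices removed, this yields the $2^{1/10000}$-per-vertex growth rate with substantial slack.

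The main obstacle, as in every such counting refinement, is orchestrating the reductions so that each case either (i) removes few enough vertices that a single free binary choice suffices, or (ii) removes more vertices but also produces proportionally many free choices. The large denominator $10000$ gives ample slack, so the real difficulty is structural: verifying that Thomassen's original reducible configurations can be upgraded to supply the requisite free vertices, and choosing the local conditions on $\phi$ and $L$ near $C$ carefully enough that the hypothesis is strong enough to support each deletion but still weak enough to specialize back to the unadorned statement of Theorem \ref{planar list 3}. A secondary subtlety is that a "free vertex" used for the doubling must genuinely remain free after an arbitrary $L'$-coloring of $G'$ is chosen, and one must track this uniformly across the case analysis.
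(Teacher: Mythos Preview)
The paper does not prove this theorem at all: Theorem~\ref{planar list 3} is stated purely as a cited result of Thomassen~\cite{Th07b}, with no proof given or even sketched. It serves only as background motivation for the paper's own main result (Theorem~\ref{main thm}), which concerns 4-list-assignments of triangle-free graphs and is proved by an entirely different method (reducible configurations and discharging, not an inductive strengthening of a choosability proof). So there is nothing in the paper to compare your proposal against.

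That said, your outline is indeed in the spirit of Thomassen's actual proof in~\cite{Th07b}: he strengthens the inductive hypothesis from his earlier 3-choosability argument~\cite{Th03} to carry a count of extensions, tracking boundary conditions and gaining binary choices when deleted vertices retain slack in their lists. But what you have written is a plan, not a proof. The phrase ``of roughly the following shape'' for the auxiliary statement, the unspecified ``local conditions on the lists of vertices near $C$,'' and the undetermined affine function $f$ are exactly where all the difficulty lies; Thomassen's case analysis is lengthy and the precise formulation of the strengthened hypothesis is delicate. Your proposal correctly identifies the architecture and the main obstacles, but it does not execute any of the steps, so it cannot be assessed for correctness beyond saying the strategy is the right one.
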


An important proof technique is to extend a coloring of a subgraph to the entire graph.  This can be viewed as list-coloring where the precolored vertices have lists of size one.  The following theorem of Postle and Thomas \cite{PT16, P16} utilizes this technique and extends Theorems \ref{planar list 5} and \ref{planar list 3} to graphs on surfaces.

\begin{thm}\label{surface lists}\cite{PT16,P16}
There exist constants $\epsilon,\alpha > 0$ such that the following holds.  Let $G$ be a graph on $n$ vertices embedded in a fixed surface $\Sigma$ of genus $g$, and let $H$ be a proper subgraph of $G$.  If $L$ is a 5-list-assignment for $G$, or $L$ is a 3-list-assignment for $G$ and $G$ has girth at least five, and if $\phi$ is an $L$-coloring of $H$ that extends to an $L$-coloring of $G$, then $\phi$ extends to at least $2^{\epsilon(n - \alpha(g + |V(H)|))}$ distinct $L$-colorings of $G$.\end{thm}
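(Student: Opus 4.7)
The plan is to prove the theorem by strong induction on $n = |V(G)|$, taking as the main technical input a linearity theorem for critical graphs on surfaces from \cite{PT16,P16}. Call $G$ \emph{$(L,\phi)$-critical with respect to $H$} if $\phi$ extends to an $L$-coloring of $G$, but for every proper subgraph $G' \subsetneq G$ with $H \subseteq G'$, some extension of $\phi$ to $G'$ fails to extend to $G$. The linearity theorem asserts that under either of the two hypotheses of the statement, there is an absolute constant $\alpha'$ such that every $(L,\phi)$-critical graph embedded in $\Sigma$ satisfies $|V(G)| \leq \alpha'(g + |V(H)|)$. I fix $\alpha$ sufficiently larger than $\alpha'$ and $\epsilon > 0$ small (with $\epsilon \leq 1$) so that the base case $n \leq \alpha(g+|V(H)|)$ is immediate: the claimed lower bound is then at most $1$, and a single extension exists by hypothesis.

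For the inductive step, suppose $n > \alpha(g+|V(H)|)$. Then $n$ exceeds the critical bound, so $G$ is not $(L,\phi)$-critical, whence there is a proper subgraph $G^* \subsetneq G$ with $H \subseteq G^*$ such that every extension of $\phi$ to $G^*$ extends to $G$. The idea is to reduce $(G, H, \phi)$ to a smaller instance while losing only a controlled multiplicative factor in the count. One natural move is to delete a single vertex $v \in V(G) \setminus V(G^*)$: applying the induction hypothesis to $(G - v, H, \phi)$ yields at least $2^{\epsilon((n-1)-\alpha(g+|V(H)|))}$ extensions of $\phi$ to $G - v$, and one would like each to extend to at least two extensions of $G$, giving a factor of $2$ per deletion. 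In the $5$-list case, this factor can be produced by arranging that $v$ has at least $|L(v)| - \deg_G(v) \geq 2$ available colors under every extension to $G-v$, achievable by selecting $v$ from a low-degree part of $G\setminus V(G^*)$ guaranteed by Euler's formula; for the $3$-list, girth-$5$ case more delicate local reductions (possibly involving short paths rather than single vertices, and exploiting that a girth-$5$ embedded graph has large faces) are needed. Iterating, the $n - |V(G^*)| \geq n - \alpha'(g+|V(H)|)$ such reductions accumulate the required exponential factor up to the linear slack $\alpha(g+|V(H)|)$.

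The hard part, by a wide margin, is establishing the linearity theorem for $(L,\phi)$-critical graphs: showing that any such $G$ has $O(g+|V(H)|)$ vertices requires a delicate discharging/Euler's-formula argument, together with a structural analysis of short separating cycles, small faces, and the interaction between the embedding and the boundary precoloring on $H$, and it forms the bulk of \cite{PT16,P16}. Once linearity is in hand, the counting argument above is comparatively routine; the only subtlety is organizing the deletions so as to guarantee a clean factor of $2$ at each step, which depends on the local list sizes and the girth hypothesis, and which is precisely the place where the $5$-list and $3$-list-with-girth-$5$ regimes are treated differently.
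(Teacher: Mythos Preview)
The paper does not prove this theorem; it is quoted from \cite{PT16,P16} as background. So there is no proof in the paper to compare against directly. That said, the paper does prove the analogous statement for triangle-free graphs with $4$-lists (Theorems~\ref{main thm} and~\ref{real thm}), and its method is instructive to compare with your proposal. The paper does \emph{not} pass through a linearity theorem for extension-critical graphs and then build up the count inductively. Instead it defines $(\epsilon,\alpha)$-\emph{exponentially}-critical pairs---where criticality is phrased directly in terms of the number of extensions---shows via Proposition~\ref{no reducible configurations} that such pairs contain no small reducible configuration (a subgraph $Q$ such that every $L$-coloring of $G-V(Q)$ extends to at least two $L$-colorings of $G$), and then runs a discharging argument to bound $|V(G)|$ linearly in $g+|V(H)|$. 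The counting is thus baked into the criticality notion, and the ``factor of $2$'' appears once, in the definition of a reducible configuration, rather than being manufactured vertex-by-vertex.

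Your proposal, by contrast, has a genuine gap in the inductive step. From the failure of your $(L,\phi)$-criticality you obtain a proper $G^*\supseteq H$ such that every extension of $\phi$ to $G^*$ extends to $G$. You then pick $v\in V(G)\setminus V(G^*)$ and apply induction to $G-v$. But an extension of $\phi$ to $G-v$ need not extend to $G$: all you know is that its \emph{restriction to $G^*$} extends to $G$, and that global extension need not agree with your coloring on $V(G-v)\setminus V(G^*)$. You try to patch this by asking that $v$ have at most $|L(v)|-2$ neighbors, so that any coloring of $G-v$ leaves two choices at $v$; but nothing in your setup guarantees that a vertex of such low degree exists in $V(G)\setminus V(G^*)$ (Euler's formula gives low-degree vertices in $G$, not in $G\setminus G^*$, and they could all lie in $H$ or in $G^*$). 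The two mechanisms---non-criticality giving $G^*$, and low degree giving the factor of $2$---are not linked, so the induction does not close. The paper's reducible-configuration framework avoids exactly this difficulty: it identifies a bounded-size $Q\subseteq G-V(H)$ for which the factor of $2$ is guaranteed structurally, and the absence of such $Q$ is what drives the discharging bound.
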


A classical theorem of Gr\H otzsch states that every triangle-free planar graph is 3-colorable.  Hence, every triangle-free planar graph has exponentially many 4-colorings.  Thomassen conjectured in \cite{Th07b} that in fact every triangle-free planar graph has exponentially many 3-colorings.  The best progress towards this conjecture is the following result due to Asadi et al..

\begin{thm}\cite{ADPT13}\label{sub-exp}
Every triangle-free planar graph on $n$ vertices has at least $2^{\sqrt{n/212}}$ distinct 3-colorings.
\end{thm}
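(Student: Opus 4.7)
The plan is to prove the statement by induction on $n$, using a Kempe-chain ``flip'' lemma to amplify a single $3$-coloring (guaranteed by Gr\"otzsch's theorem) into many. The target exponent of $\sqrt{n}$ strongly suggests finding $\Omega(\sqrt{n})$ pairwise independent flips on top of one given coloring, rather than a linear number, so the whole argument should aim at a recurrence of the form $f(n) \ge 2\, f(n - c\sqrt{n})$ or equivalently at a one-shot production of $\Omega(\sqrt{n})$ commuting recolorings.

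First I would establish the amplification step. Fix a $3$-coloring $\phi$ of $G$, and for each pair $\{i,j\}$ of colors consider the subgraph $B_{ij}$ of $G$ induced on $\phi^{-1}(i) \cup \phi^{-1}(j)$; this subgraph is automatically bipartite since $\phi$ is proper. Each connected component of $B_{ij}$ is a Kempe chain whose colors can be swapped independently of all others, producing $2^{c(B_{ij})}$ distinct $3$-colorings of $G$, where $c(B_{ij})$ denotes the number of components. Hence it suffices to exhibit a single $3$-coloring $\phi$ under which some $B_{ij}$ has at least $\sqrt{n/212}$ components. A natural source of components is short ``alternating'' faces, in particular $4$-faces whose boundary reads $i,j,i,j$ that are isolated in $B_{ij}$.

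Second, I would show that such a $3$-coloring always exists. Because $G$ is triangle-free planar, Euler's formula gives $|E(G)| \le 2n - 4$, so $G$ has many short faces and a fairly restricted structure. I would then split into cases depending on whether $G$ has a small balanced separator (Lipton--Tarjan provides one of order $O(\sqrt{n})$, in which case the two sides can be handled recursively and recombined) or whether it is ``dense enough'' to contain $\Omega(\sqrt{n})$ vertex-disjoint short faces on which a coloring can be arranged to be alternating. A precoloring-extension tool of Aksionov--Mel'nikov type for triangle-free planar graphs should allow one to prescribe the colors on these disjoint faces independently and extend to the rest.

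The hard part will be ensuring that the Kempe components (or disjoint short faces) found are \emph{genuinely} independent, and that the arithmetic delivers the specific constant $212$. Thomassen's conjecture of $2^{\Omega(n)}$ colorings remaining open underscores the obstruction: if a constant-sized reducible configuration always existed, a straightforward induction would immediately yield a linear exponent. So one cannot expect to extract a factor of $2$ from every short face, and the $\sqrt{n}$ bound reflects the fact that only a global argument --- separator decomposition, or a careful discharging that balances low-degree vertices against short faces --- can isolate $\Omega(\sqrt{n})$ flips whose effects provably commute. The constant $212$ should fall out of the discharging accounting of how much of the graph must be ``spent'' around each flippable configuration.
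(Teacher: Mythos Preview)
This theorem is only \emph{cited} in the paper (as a result of Asadi, Dvo\v{r}\'ak, Postle, and Thomas \cite{ADPT13}); the present paper contains no proof of it, so there is no ``paper's own proof'' against which to compare your attempt.

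Evaluating your proposal on its own merits: what you have written is a plan, not a proof, and the two load-bearing steps are precisely the places where the real difficulty sits. First, the separator branch: having a Lipton--Tarjan separator $S$ of size $O(\sqrt{n})$ does not let you ``handle the two sides recursively and recombine,'' because colorings of $G[A\cup S]$ and $G[B\cup S]$ must agree on $S$ to glue, and you have no control over how many colorings on one side are compatible with a given coloring of $S$. Second, the disjoint-faces branch: Aksionov--Mel'nikov type results let you extend a precoloring of a \emph{single} short face; there is no off-the-shelf theorem that lets you prescribe alternating patterns on $\Omega(\sqrt{n})$ disjoint $4$-faces simultaneously and still extend. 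Absent such a tool, the Kempe-chain amplification never gets started, since you have not produced any $3$-coloring with many Kempe components.

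For what it is worth, the argument in \cite{ADPT13} does not proceed via Kempe chains or planar separators at all; it goes through a criticality/precoloring-extension framework and bounds the size of graphs that are ``tight'' for the counting problem, which is closer in spirit to the exponentially-critical machinery used elsewhere in the present paper than to the outline you propose.
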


Theorem \ref{sub-exp} can not be extended to list-coloring, since there exist triangle-free planar graphs that are not 3-choosable.  However, it is an easy consequence of Euler's formula that every triangle-free planar graph is 4-choosable.  Thus, it is natural to ask if a result analagous to Theorem \ref{surface lists} holds for 4-list-coloring triangle-free graphs on surfaces. The following is our main theorem.

\begin{thm}\label{main thm}
Let $G$ be a triangle-free graph on $n$ vertices embedded in a fixed surface $\Sigma$ of genus $g$, and let $L$ be a 4-list-assignment for $G$.  If $H\subsetneq G$, and $\phi$ is an $L$-coloring of $H$ that extends to $G$, then $\phi$ extends to $2^{(n - 130(g + |V(H)|))/8}$ distinct $L$-colorings of $G$.
\end{thm}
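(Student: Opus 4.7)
My plan is to prove the theorem by induction on $|V(G)| - |V(H)|$. The base case occurs when $n \leq 130(g + |V(H)|)$: the exponent on the right-hand side is nonpositive, so the claimed bound $2^{(n - 130(g + |V(H)|))/8} \leq 1$ is satisfied by the single extension of $\phi$ guaranteed in the hypothesis. The entire weight of the argument therefore lies in the inductive step, where $n > 130(g + |V(H)|)$.

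For the inductive step, my goal is to isolate a small ``reducible'' set $S \subseteq V(G) \setminus V(H)$ with $|S| \leq 8$ such that $\phi$ still extends to an $L$-coloring of $G' := G - S$ and every such extension $\phi'$ of $\phi$ to $G'$ extends to at least $2$ different $L$-colorings of $G$. Given such an $S$, apply induction to the triangle-free graph $G'$, which is embedded in the same surface $\Sigma$ with the same boundary $H$ and precoloring $\phi$. The inductive hypothesis yields at least $2^{(n - 8 - 130(g + |V(H)|))/8}$ extensions of $\phi$ to $G'$, each producing at least $2$ distinct extensions to $G$, for a total of at least $2^{(n - 130(g + |V(H)|))/8}$ extensions of $\phi$ to $G$, as required.

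To locate such an $S$, I would first invoke (or establish) a linear structural bound on triangle-free $4$-list-critical graphs with precolored boundary on surfaces: there is a constant $\beta$ such that any such critical instance satisfies $|V(G)| \leq \beta(g + |V(H)|)$. When $n > 130(g+|V(H)|)$, in particular when $n > \beta(g + |V(H)|)$, this criticality fails, and a discharging argument based on Euler's formula---which is especially strong in the triangle-free setting, since every face has length at least $4$---yields a small reducible configuration in $G - H$. Natural candidate configurations are a vertex of degree at most $2$ outside $V(H)$ (giving $|S| = 1$ with an automatic factor of $2$ in the list of available colors), a $4$-face whose boundary vertices have total degree sum bounded above, or a short path of degree-$3$ vertices in $G - H$ whose internal structure forces a list slack at some vertex. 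In each case one verifies that removing $S$ preserves the extendability of $\phi$ and that $S$ has small enough total neighborhood to guarantee two choices for some vertex when recoloring.

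The main obstacle is the construction of the reducible configurations and the accompanying discharging. The linear critical-graph bound for triangle-free $4$-list-critical graphs on surfaces is the key structural input, and proving it (or extracting it from the literature in the precolored-boundary form needed here) is the technically demanding step: $4$-list-colorability of triangle-free graphs is more delicate than $5$-list or $3$-list-colorability, so the Postle--Thomas template behind Theorem \ref{surface lists} has to be adapted rather than copied. The specific constants $130$ and $1/8$ in the statement are precisely tuned so that the discharging balance over the genus contribution, the boundary contribution, and the interior of $G - H$ produces, on average, at least one ``doubling opportunity'' of size at most $8$; any loosening of the reducible configurations would force a worsening of one of these constants.
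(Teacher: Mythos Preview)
Your outline is essentially the paper's approach. The paper packages your induction as a minimal-counterexample argument, calling such a minimal pair $(G,H)$ \emph{$(\epsilon,\alpha)$-exponentially-critical}, observes (exactly your key step) that such a pair contains no reducible configuration of size at most $1/\epsilon=8$, and then runs a discharging argument directly on triangle-free embedded graphs with no such configuration to obtain $|V(G)|\le 50(|V(H)|-\tfrac{13}{5})+130g$.

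One point to tighten: the detour through a linear bound for ordinary triangle-free $4$-list-\emph{critical} graphs in your third paragraph is a red herring. Failure of ordinary list-criticality does not by itself produce a set $S$ with the ``two extensions'' property you need; the discharging must be carried out directly on the class of graphs with no reducible configuration of size $\le 8$, which is precisely what the paper does. The paper's concrete configurations refine your candidates: a $4$-cycle all of whose vertices have degree at most $4$, and a \emph{poppy} --- a vertex $v$ together with at least $d(v)-2$ pairwise internally disjoint paths from $v$ to degree-$3$ vertices through degree-$4$ vertices --- which subsumes your ``short path of degree-$3$ vertices'' and the isolated degree-$\le 2$ vertex as special cases.
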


In order to prove Theorem \ref{main thm}, we prove a stronger result for which we need the following definition.
\begin{define}
Let $\epsilon,\alpha\geq 0$.  Let $G$ be a graph embedded in a surface $\Sigma$ of Euler genus $g$, let $H$ be a proper subgraph of $G$, and let $L$ be a list-assignment for $G$.  We say that $(G, H)$ is {\em $(\epsilon,\alpha)$-exponentially-critical} with respect to $L$ if for every proper subgraph $G'$ of $G$ such that $H\subseteq G'$, there exists an $L$-coloring $\phi$ of $H$ such that there exists $2^{\epsilon(|V(G')| - \alpha(g + |V(H)|))}$ distinct $L$-colorings of $G'$ extending $\phi$, but there do not exist $2^{\epsilon(|V(G)| - \alpha(g+|V(H)|))}$ distinct $L$-colorings of $G$ extending $\phi$.
\end{define}

We prove the following theorem, which implies Theorem \ref{main thm}.

\begin{thm}\label{real thm}
Suppose $(G,H)$ is $(\epsilon,\alpha)$-exponentially-critical and $G$ is triangle-free.  For all $\alpha\geq0$, if $0\leq\epsilon\leq\frac{1}{8}$, then $|V(G)|\leq 50\left(|V(H)|-\frac{13}{5}\right) + 130g$.
\end{thm}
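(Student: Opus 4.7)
The plan combines criticality-based reducibility with a discharging argument on the embedding of $G$ in $\Sigma$.

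The first step is to establish a minimum-degree bound off $V(H)$. For any $v\in V(G)\setminus V(H)$, applying the $(\epsilon,\alpha)$-exponentially-critical hypothesis to $G':=G-v$ yields an $L$-coloring $\phi$ of $H$ with at least $A:=2^{\epsilon(|V(G)|-1-\alpha(g+|V(H)|))}$ extensions to $G'$ and fewer than $2^\epsilon A$ extensions to $G$. Since $|L(v)|=4$, each extension to $G-v$ extends to at least $4-d_G(v)$ extensions to $G$, so if $d_G(v)\leq 2$ one obtains at least $2A$ extensions, violating the critical upper bound for $\epsilon\leq 1$. Hence every vertex outside $V(H)$ has degree at least $3$.

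Next, the same template with $G'=G-S$ for small $S\subseteq V(G)\setminus V(H)$ yields further structural constraints. The model case is two adjacent degree-$3$ vertices $u,v$ off $V(H)$: triangle-freeness implies that $u$ and $v$ share no common neighbor, so their other neighbors are four distinct vertices of $G-\{u,v\}$. Criticality provides a $\phi$ with at least $2^{\epsilon(|V(G)|-2-\alpha(g+|V(H)|))}$ extensions $\psi$ to $G-\{u,v\}$, and for each $\psi$ the edge $uv$ admits at least $|A_\psi||B_\psi|-\min(|A_\psi|,|B_\psi|)\geq 2$ completions, where $A_\psi\subseteq L(u)$ and $B_\psi\subseteq L(v)$ are the colors left after deleting those used on the other neighbors of $u$ and $v$, each of size at least $2$. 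Multiplying gives at least $2\cdot 2^{\epsilon(|V(G)|-2-\alpha(g+|V(H)|))}$ extensions to $G$, exceeding the critical bound for $\epsilon\leq 1/2$. Analogous but more delicate reductions (short paths of degree-$3$ vertices, degree-$3$ vertices sharing a $4$-face with other low-degree vertices, and similar local configurations) further constrain $G$ as required by the subsequent discharging.

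The final step is discharging on the embedding of $G$ in $\Sigma$. Assign each vertex $v$ initial charge $d(v)-4$ and each face $f$ initial charge $\ell(f)-4$; by Euler's formula and triangle-freeness ($\ell(f)\geq 4$), face charges are nonnegative and the total charge equals $4g-8$. Negative charge is supported on degree-$3$ vertices, which by the previous steps lie off $V(H)$ only when their neighbors have degree at least $3$ and they are well-separated from other low-degree vertices off $V(H)$. Designing discharging rules so that each degree-$3$ vertex of $V(G)\setminus V(H)$ ends nonnegative, while bounding the amount absorbed by each vertex of $V(H)$ and each unit of Euler genus, yields the claimed inequality $|V(G)|\leq 50(|V(H)|-13/5)+130g$.

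The principal obstacle is the second step together with its tight coupling to the discharging. Because the critical margin per removed vertex is only $2^\epsilon\leq 2^{1/8}\approx 1.09$, each reducibility argument has razor-thin slack; the challenge is to identify exactly the reducible configurations whose absence permits a discharging scheme to produce the specific constants $50$ and $130$. Triangle-freeness is used essentially in every reduction, ensuring that the neighborhoods of low-degree vertices are independent and hence contribute many free colors to the extension count.
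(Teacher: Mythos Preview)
Your outline correctly identifies the overall architecture---reducibility of small configurations combined with discharging---and your first two reductions (minimum degree $3$ off $V(H)$; no adjacent degree-$3$ vertices off $V(H)$) are valid and match the paper. But the proposal has a genuine gap where it must connect reducibility to the stated inequality.

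First, the discharging scheme you describe cannot produce an upper bound on $|V(G)|$. With initial charges $d(v)-4$ and $\ell(f)-4$, making every element nonnegative after redistribution yields only $0 \le 4g-8$, which says nothing about $|V(G)|$. To extract $|V(G)|$ one needs a uniform \emph{strictly positive} lower bound $c>0$ on the final charge of every vertex, so that $c\,|V(G)|$ is at most the total charge. The paper accomplishes this by inflating the initial charge of each $v\in V(H)$ to $d(v)+3\gamma-1$ for a parameter $\gamma>0$, so that the total becomes $(3+3\gamma)|V(H)|+4(2g-2)$, and then arranging that every vertex finishes with charge at least a fixed positive constant (ultimately $\gamma=4/195$ and the bound is $4/65$). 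In particular, degree-$4$ vertices off $V(H)$ start at charge exactly $0$ and must \emph{receive} charge; your sketch provides them no source.

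Second, the two reductions you actually carry out are far too weak to support any such scheme, and the phrase ``analogous but more delicate reductions'' hides precisely the content of the proof. The paper relies on two further reducible configurations you have not identified: (i) a $4$-cycle in $G-V(H)$ all of whose vertices have degree at most $4$, which forces every $4$-face incident with a non-major $4^-$-vertex to contain a major vertex---this is exactly what supplies charge to degree-$3$ and degree-$4$ vertices across $4$-faces; and (ii) a \emph{poppy}: a vertex $v$ together with at least $d(v)-2$ internally disjoint short paths through degree-$4$ vertices to degree-$3$ vertices. The poppy, reducible whenever it has at most $1/\epsilon\ge 8$ vertices, is what caps the charge a degree-$5$ or degree-$6$ vertex can lose to nearby $3$-vertices; without it those vertices can be drained arbitrarily far below zero and no discharging balances. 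Your own final paragraph concedes that this coupling is the principal obstacle; as written, the proposal is a plan rather than a proof.
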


\begin{proof}[Proof of Theorem \ref{main thm} assuming Theorem \ref{real thm}]
Let $(G,H)$ be a minimal counterexample.  Then there exists an $L$-coloring $\phi$ of $H$ that extends to $G$ that does not extend to $2^{(V(G)| - 130(g + |V(H)|))/8}$ distinct $L$-colorings of $G$.  By the minimality of $G$, $G$ is $(\epsilon,\alpha)$-exponentially-critical, where $\epsilon=\frac{1}{8}$ and $\alpha = 130$.  Hence, by Theorem \ref{real thm}, $|V(G)| \leq 50(|V(H)| - \frac{13}{5}) + 130g$.  Therefore $\phi$ does not extend to an $L$-coloring of $G$, a contradiction.
\end{proof}

We prove Theorem \ref{real thm} using the method of reducible configurations and discharging.  In this paper, if $G$ is a graph and $H\subsetneq G$, then a {\em reducible configuration} of $(G,H)$ is a nonempty subgraph $Q$ of $G-V(H)$ such that for every 4-list-assignment $L$ of $G$, every $L$-coloring of $G-V(Q)$ extends to at least two distinct $L$-coloring of $G$.  In Section \ref{reducible config}, we prove that certain reducible configurations do not occur in $(\epsilon,\alpha)$-exponentially-critical graphs.  In Section \ref{proof section}, we prove Theorem \ref{real thm} using discharging.

Finally, we remark that a version of Theorem \ref{real thm} can be proved if $\epsilon \leq \frac{1}{7}$, at the expense of a worse bound on $|V(G)|$ and a more complicated discharging argument.  

%%%%%%%%%%%%%%%%%%%%%%%%%%%%%%%%%%%%%%%%%%%%%%%%%%%%%%%%%

\section{Reducible Configurations}\label{reducible config}

We first prove that small reducible configurations do not occur in $(\epsilon,\alpha)$-exponentially-critical graphs.  

\begin{prop}\label{no reducible configurations}
If $(G,H)$ is $(\epsilon,\alpha)$-exponentially-critical with respect to some 4-list-assignment $L$, then $(G,H)$ does not contain any reducible configurations of size at most $\frac{1}{\epsilon}$.
\end{prop}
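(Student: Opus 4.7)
The plan is to argue by contradiction, unwinding the two relevant definitions and observing that a single factor of $2$ in the number of extensions compensates exactly for removing up to $1/\epsilon$ vertices inside the exponent.

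Suppose $(G,H)$ contains a reducible configuration $Q$ with $|V(Q)|\leq 1/\epsilon$. Since $Q$ is nonempty and disjoint from $V(H)$, the graph $G':=G-V(Q)$ is a proper subgraph of $G$ satisfying $H\subseteq G'$. First I would apply the definition of $(\epsilon,\alpha)$-exponential-criticality to this particular $G'$ to produce an $L$-coloring $\phi$ of $H$ with the two properties built into the definition: $\phi$ extends to at least
\[
N' := 2^{\epsilon(|V(G')|-\alpha(g+|V(H)|))}
\]
distinct $L$-colorings of $G'$, but $\phi$ extends to strictly fewer than
\[
N := 2^{\epsilon(|V(G)|-\alpha(g+|V(H)|))}
\]
distinct $L$-colorings of $G$.

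Next I would invoke the reducible-configuration hypothesis: since $Q$ is reducible and $G-V(Q)=G'$, every $L$-coloring of $G'$ extends to at least two distinct $L$-colorings of $G$. Because distinct $L$-colorings of $G'$ have disjoint sets of extensions to $G$ (they differ already on $V(G')$), this doubles the count, so the number of $L$-colorings of $G$ extending $\phi$ is at least $2N'$.

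Finally, I would verify the arithmetic. Using $|V(G')|=|V(G)|-|V(Q)|$ and the hypothesis $\epsilon\,|V(Q)|\leq 1$,
\[
2N' \;=\; 2\cdot 2^{\epsilon(|V(G)|-|V(Q)|-\alpha(g+|V(H)|))} \;=\; 2^{\,1-\epsilon|V(Q)|}\cdot N \;\geq\; N,
\]
which contradicts the bound $<N$ obtained from criticality. This contradiction proves the proposition.

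The argument is essentially a one-line computation once the definitions are unpacked, so I do not expect a real obstacle; the only item to watch is that the factor of $2$ gained from reducibility is matched against the loss of $\epsilon|V(Q)|\leq 1$ in the exponent, which is exactly why the threshold $|V(Q)|\leq 1/\epsilon$ appears in the statement.
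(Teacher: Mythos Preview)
Your argument is correct and follows essentially the same route as the paper's proof: apply the definition of $(\epsilon,\alpha)$-exponential-criticality to $G'=G-V(Q)$, double the count of extensions using reducibility, and compare exponents. The only cosmetic difference is that the paper phrases it as showing any reducible configuration must satisfy $|V(Q)|>1/\epsilon$, while you phrase it as a direct contradiction from $|V(Q)|\le 1/\epsilon$; your explicit remark that distinct colorings of $G'$ have disjoint extension sets is a point the paper leaves implicit.
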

\begin{proof}
Suppose that $Q\subseteq G-V(H)$ is a reducible configuration.  We want to show $|V(Q)| > \frac{1}{\epsilon}$.  Since $(G,H)$ is $(\epsilon,\alpha)$-exponentially-critical, there exists an $L$-coloring $\phi$ of $H$ such that there exists $2^{\epsilon(|V(G)| - |V(Q)| - \alpha(g + |V(H)|))}$ distinct $L$-colorings of $G-V(Q)$ extending $\phi$, but there do not exist $2^{\epsilon(|V(G)| - \alpha(g + |V(H)|))}$ distinct $L$-colorings of $G$ extending $\phi$.  Since $Q$ is a reducible configuration, every $L$-coloring of $G-V(Q)$ extending $\phi$ has at least two extensions to an $L$-coloring of $G$.  Hence, $G$ has at least $2^{\epsilon(|V(G)| - |V(Q| - \alpha(g + |V(H)|)) + 1} = 2^{\epsilon(|V(G)| - \alpha(g + |V(H)|)) + 1 - \epsilon|V(Q)|}$ distinct $L$-colorings extending $\phi$.  Therefore $|V(Q)| > \frac{1}{\epsilon}$, as desired.
\end{proof}

We now present our first reducible configuration.

\begin{lemma}\label{4-face with 4-vertices}A 4-cycle $C\subseteq G-V(H)$ is a reducible configuration if for all $v\in V(C)$, $v$ has degree at most four in $G$.\end{lemma}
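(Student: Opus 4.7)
The plan is to fix an arbitrary 4-list-assignment $L$ of $G$ and an $L$-coloring $\psi$ of $G-V(C)$, and exhibit at least two extensions of $\psi$ to $G$. Writing $C = v_1 v_2 v_3 v_4$, define residual lists $L'(v_i) := L(v_i) \setminus \{\psi(u) : u \in N_G(v_i)\setminus V(C)\}$, so the extensions of $\psi$ correspond bijectively to the proper $L'$-colorings of $C$. Since $G$ is triangle-free, $C$ is chordless, so each $v_i$ has exactly two neighbors in $V(C)$ and at most $\deg_G(v_i)-2 \leq 2$ neighbors in $G-V(C)$, giving $|L'(v_i)| \geq 2$. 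It therefore suffices to prove the self-contained statement that every 2-list-assignment on $C_4$ admits at least two proper list-colorings.

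I would prove this last statement by contradiction. Suppose $\phi$ is the unique proper $L'$-coloring of $C_4$, with each $|L'(v_i)|=2$, and write $L'(v_i) = \{\phi(v_i), c_i\}$. Uniqueness forces each single-vertex recoloring $v_i \mapsto c_i$ to create a conflict, so $c_i \in \{\phi(v_{i-1}), \phi(v_{i+1})\}$ for every $i$ (indices mod $4$). Assign $\epsilon_i := +1$ if $c_i = \phi(v_{i+1})$ and $\epsilon_i := -1$ if $c_i = \phi(v_{i-1})$. A direct check shows the adjacent double swap $v_i \mapsto c_i,\ v_{i+1}\mapsto c_{i+1}$ (leaving the other two vertices fixed) is proper exactly when $\epsilon_i = +1$ and $\epsilon_{i+1} = -1$. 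Uniqueness forbids all four such swaps, which rules out every cyclic transition $+1 \to -1$ and therefore forces $\epsilon_1 = \epsilon_2 = \epsilon_3 = \epsilon_4$.

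In either surviving case I can exhibit a second coloring explicitly. If $\epsilon_i = +1$ for every $i$, then $L'(v_i) = \{\phi(v_i), \phi(v_{i+1})\}$, and the cyclic rotation $\phi'(v_i) := \phi(v_{i+1})$ is a proper $L'$-coloring distinct from $\phi$ (properness follows from properness of $\phi$, and $\phi'(v_1) = \phi(v_2) \neq \phi(v_1)$). The symmetric case $\epsilon_i = -1$ uses $\phi'(v_i) := \phi(v_{i-1})$. Either way we contradict the assumed uniqueness.

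The main obstacle is the cyclic bookkeeping on the $\epsilon_i$; the key insight is that once single-vertex swaps are ruled out, the only way for all adjacent double swaps to also fail is that $L'$ itself is a uniform cyclic shift of $\phi$, and in that rigid situation the shift immediately supplies the required second coloring.
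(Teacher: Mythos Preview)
Your reduction is exactly the paper's: the paper also fixes an $L$-coloring of $G-V(C)$ and observes that, since each $v\in V(C)$ has at most two neighbours outside $C$, the residual lists have size at least two; it then simply \emph{asserts} that a $4$-cycle with lists of size two has at least two proper colourings. You go further and supply a proof of that asserted fact.

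That extra proof has a small gap. Your sign $\epsilon_i$ is well-defined only when $\phi(v_{i-1})\neq\phi(v_{i+1})$; if the two opposite neighbours of $v_i$ receive the same colour under $\phi$, then $c_i$ equals both $\phi(v_{i-1})$ and $\phi(v_{i+1})$, and your ``direct check'' fails (the double swap at $(i,i+1)$ is \emph{not} proper even though one could formally set $\epsilon_i=+1,\ \epsilon_{i+1}=-1$, because the edge $v_{i-1}v_i$ becomes monochromatic). The fix is easy: treat the case where some opposite pair coincides separately. If, say, $\phi(v_1)=\phi(v_3)$, then $c_2$ and $c_4$ are forced to equal that common colour, and one checks directly that the \emph{global} swap $v_j\mapsto c_j$ for all $j$ is a second proper $L'$-colouring. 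With that case removed, every $\epsilon_i$ is unambiguous and the rest of your argument goes through verbatim.
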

\begin{proof}
Let $L$ be some 4-list-assignment for $G$, and let $\phi$ be an $L$-coloring of $G-V(C)$.  Note that there are two distinct list-colorings of a 4-cycle when every vertex has at least two available colors.  Hence, there are at least two distinct $L$-colorings of $G$ extending $\phi$, as desired.
\end{proof}

For our next reducible configuration, we need the following definitions.  

\begin{define}If $P$ is a path, and $v\in V(P)$ is not an end of $P$, then we say $v$ is an {\em internal vertex} of $P$.  If $P'$ is also a path, we say $P$ and $P'$ are {\em internally disjoint} if they share no internal vertices.\end{define}

\begin{define}We say a path $P\subseteq G$ is a {\em stamen} in $(G, H)$ if there exists an end $u\in V(G)\backslash V(H)$ of $P$ such that the degree of $u$ is precisely three in $G$, and in addition, every internal vertex of $P$ has degree four and is not in $H$.  If $v\neq u$ is an end of $P$, then we say $P$ is a {\em $v$-stamen}.\end{define}

If $v\in V(G)$, let $d(v)$ denote the degree of $v$ in $G$.

\begin{define}We say $G'\subseteq G-V(H)$ is a {\em poppy} of $(G, H)$ if there is some $v\in V(G')$ such that $G'$ is the union of $v$ and at least $d(v)-2$ internally disjoint $v$-stamens.\end{define}

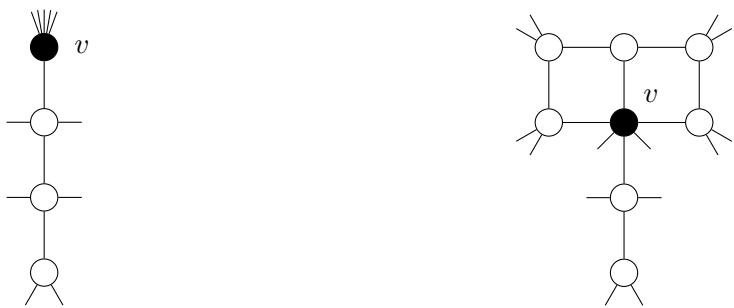
\begin{figure}[h]
\begin{minipage}[b]{.5\linewidth}
	\centering
	\begin{tikzpicture}
		\tikzstyle{every node}=[draw, circle];
		
		\node[label=0:$v$, fill] (1) at (0, 0) {};
		\node(2) at (0,-1) {};
		\node(3) at (0, -2) {};
		\node (4) at (0, -3) {};
		
		\draw (1) -- (2) -- (3) -- (4);
		
		\draw (1) -- ($(1) + (70:.5)$);
		\draw (1) -- ($(1) + (80:.5)$);
		\draw (1) -- ($(1) + (90:.5)$);
		\draw (1) -- ($(1) + (100:.5)$);
		\draw (1) -- ($(1) + (110:.5)$);
		
		\draw (2) -- ($(2) + (0:.5)$); \draw (2) -- ($(2) + (180:.5)$);
		\draw (3) -- ($(3) + (0:.5)$); \draw (3) -- ($(3) + (180:.5)$);
		\draw (4) -- ($(4) + (-60:.5)$); \draw (4) -- ($(4) + (-120:.5)$);
		
	\end{tikzpicture}
\end{minipage}%
\begin{minipage}[b]{.5\linewidth}
	\centering
	\begin{tikzpicture}
		\tikzstyle{every node}=[draw, circle];
		
		\node[label=45:$v$, fill] (main) at (0, 0) {};
		\node (1) at (1, 0) {};
		\node (2) at (1, 1) {};
		\node (3) at (-1, 0) {};
		\node (4) at (-1, 1) {};
		\node (5) at (0, 1) {};
		
		\draw (main) -- (1) -- (2) -- (5) -- (4) -- (3) -- (main) -- (5);
		
		\draw (main) -- ($(main) + (-45:.5)$); \draw (main) -- ($(main) + (-135:.5)$);
		
		\draw (1) -- ($(1) + (-60:.5)$); \draw (1) -- ($(1) + (-30:.5)$);
		\draw (2) -- ($(2) + (60:.5)$); \draw (2) -- ($(2) + (30:.5)$);
		
		\draw (3) -- ($(3) + (-120:.5)$); \draw (3) -- ($(3) + (-150:.5)$);
		\draw (4) -- ($(4) + (120:.5)$); \draw (4) -- ($(4) + (150:.5)$);
		
		\node (6) at (0, -1){}; \draw (main) -- (6);
		\node (7) at (0,-2){}; \draw (6) -- (7);
		\draw (6) -- ($(6) + (0:.5)$); \draw (6) -- ($(6) + (180:.5)$);
		\draw (7) -- ($(7) + (-60:.5)$); \draw (7) -- ($(7) + (-120:.5)$);
	\end{tikzpicture}
\end{minipage}%
\caption{A $v$-stamen and a poppy}
\label{6-vertex triple}
\end{figure}

We next prove that a poppy is a reducible configuration, but first we need the following definition and a classical theorem of Erd\H os, Rubin, and Taylor \cite{ERT80}.
\begin{define}We say $G$ is {\em degree-choosable} if for every list-assignment $L$ such that for all $v\in V(G)$, $|L(v)|\geq d(v)$, $G$ has an $L$-coloring.\end{define}

\begin{thm}\cite{ERT80}\label{degree-choosable}
A connected graph $G$ is not degree-choosable if and only if every block of $G$ is a clique or an odd cycle.  Furthermore, if $G$ does not have an $L$-coloring for some $L$ with $|L(v)|\geq d(v)$, then for all $v\in V(G)$, $|L(v)| = d(v)$.\end{thm}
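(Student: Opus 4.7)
The theorem splits into the ``if'' direction (every block is a clique or odd cycle implies $G$ is not degree-choosable), the ``only if'' direction (some block is neither implies $G$ is degree-choosable), and the ``furthermore'' clause. For the ``if'' direction, I would construct an explicit bad list assignment $L$ with $|L(v)|=d(v)$. The single-block base cases are: for $G=K_n$, assign every vertex the list $\{1,\ldots,n-1\}$, which has size $d(v)$ and admits no proper coloring by pigeonhole; for $G$ an odd cycle, assign every vertex the list $\{1,2\}$, which admits no proper coloring since odd cycles are not $2$-colorable. For a connected graph with multiple blocks, all cliques or odd cycles (a Gallai tree), I would induct on the number of blocks: pick a leaf block $B$ with cut vertex $v$, let $G'=G-(V(B)\setminus\{v\})$, apply induction to $G'$ to obtain bad lists $L'$, and extend to $B$ on a color palette disjoint from $L'$ except at $v$, sized so that $|L(v)|=|L'(v)|+d_B(v)=d_G(v)$; any $L$-coloring of $G$ would restrict to an impossible $L'$-coloring of $G'$ or an impossible block-list-coloring of $B$.

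For the ``only if'' direction, I would proceed by strong induction on $|V(G)|$. If $G$ has a cut vertex $u$ separating $G$ into $G_1\cup G_2$ with $V(G_1)\cap V(G_2)=\{u\}$ and the non-Gallai block lying in $G_1$, I color $G_2$ first (the relation $|L(u)|\geq d_{G_1}(u)+d_{G_2}(u)$ ensures flexibility at $u$) and then apply induction to $G_1$ with a reduced list at $u$ of size $\geq d_{G_1}(u)$. For the 2-connected base case, I rely on the classical structural lemma that a 2-connected graph $G$ which is neither a clique nor a cycle admits a pair of non-adjacent vertices $u,v$ with a common neighbor $w$ such that $G-\{u,v\}$ is connected (cycles themselves are excluded because odd cycles lie in the Gallai family and even cycles are directly $2$-list-colorable). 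Given such $u,v,w$, I split into subcases: if $L(u)\cap L(v)\neq\emptyset$, color both $u$ and $v$ with a common color $c$, so that $w$ loses one color but two degrees and thereby gains strict list excess, allowing the inductive hypothesis (possibly via the ``furthermore'' clause) to color $G-\{u,v\}$; if $L(u)\cap L(v)=\emptyset$, choose $c_u\in L(u), c_v\in L(v)$ carefully (so that some post-reduction vertex acquires strict list excess), and induct.

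The ``furthermore'' clause I would prove as a strengthened statement integrated into the same induction: if $|L(v)|\geq d(v)$ for all $v$ and $|L(u)|>d(u)$ for some $u$, then $G$ has an $L$-coloring. Indeed, for $G$ connected, every component of $G-u$ contains a neighbor $w$ of $u$ with $|L(w)|\geq d_G(w)>d_{G-u}(w)$, so each component of $G-u$ is $L$-colorable by induction, and the coloring extends to $u$ since $|L(u)|>d(u)$ exceeds the number of colors forbidden by $u$'s neighbors. The main obstacle I expect is the 2-connected base case: verifying the structural lemma and carefully handling the disjoint-lists subcase so that the post-reduction graph $G-\{u,v\}$ either retains a non-Gallai block or acquires a vertex with strict list excess, guaranteeing a successful inductive coloring even when $G-\{u,v\}$ is itself a Gallai tree.
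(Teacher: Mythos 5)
The paper does not prove Theorem~\ref{degree-choosable} at all --- it is quoted as a known result of Erd\H{o}s, Rubin, and Taylor \cite{ERT80} (the hard direction is also due to Borodin) and used as a black box in Lemma~\ref{super lemma}. So there is no in-paper argument to compare against; I can only assess your reconstruction of the classical proof on its own terms.

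Your architecture is the standard one, and most of it is sound: the bad-list construction on Gallai trees ($\{1,\dots,n-1\}$ on cliques, $\{1,2\}$ on odd cycles, fresh palettes glued at cut vertices) is correct, as is the ``furthermore'' clause via the surplus-vertex induction (color the components of $G-u$ first, each of which contains a neighbor of $u$ with strict surplus, then finish at $u$), and the reduction of the hard direction to the 2-connected case via end-blocks. The genuine gap is exactly where you predicted: the subcase $L(u)\cap L(v)=\emptyset$ in the 2-connected step. Your plan --- ``choose $c_u,c_v$ carefully so that some post-reduction vertex acquires strict list excess'' --- can fail outright. If every neighbor $x$ of $u$ satisfies $L(u)\subseteq L(x)$ and every neighbor $y$ of $v$ satisfies $L(v)\subseteq L(y)$ (which forces $d(w)\geq d(u)+d(v)$ but is otherwise perfectly consistent with all lists being tight), then \emph{every} choice of $c_u\in L(u)$ and $c_v\in L(v)$ deletes exactly one color per deleted degree at every vertex of $G-\{u,v\}$, so no vertex acquires surplus, and $G-\{u,v\}$ may well be a Gallai tree; your induction then has nothing to grab onto. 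The classical fix is to strengthen the inductive statement: one characterizes \emph{all} non-colorable pairs $(G,L)$ with $|L(v)|\geq d(v)$, showing that $G$ must be a Gallai tree \emph{and} the lists must be canonical, i.e.\ there are pairwise disjoint color sets $C_B$ (of size $|V(B)|-1$ for a clique block, $2$ for an odd cycle block) with $L(v)=\bigcup_{B\ni v}C_B$. With that stronger hypothesis the stuck subcase disappears, because any deviation of $L$ from the canonical form can be exploited directly. Without the strengthening, your induction does not close, so as written the hard direction is incomplete.
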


\begin{lemma}\label{super lemma}
If $Q$ is a poppy of $(G,H)$, then $Q$ is a reducible configuration.
\end{lemma}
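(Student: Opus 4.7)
The plan is to show that for any 4-list-assignment $L$ of $G$ and any $L$-coloring $\phi$ of $G-V(Q)$, at least two distinct $L$-colorings of $G$ extend $\phi$; this is exactly what it means for $Q$ to be a reducible configuration. Let $v$ denote the center of the poppy $Q$. First I would reduce lists by setting $L'(w)=L(w)\setminus\{\phi(x):x\in N_G(w)\setminus V(Q)\}$ for each $w\in V(Q)$ and check that $|L'(w)|\ge 2$ throughout $Q$: the vertex $v$ has at least $d(v)-2$ of its neighbors in $Q$ (the starts of its stamens), so at most two outside; a degree-three end $u$ of a stamen loses at most two colors since its stamen-neighbor lies in $Q$; and a degree-four internal vertex of a stamen has both stamen-neighbors in $Q$.

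The next step is to produce, for each of two distinct choices $c\in L'(v)$, an $L'$-coloring of $Q$ with $v\mapsto c$; two such colorings differ on $v$, so combined with $\phi$ they yield two distinct $L$-colorings of $G$ extending $\phi$. Extending $v\mapsto c$ to $Q$ reduces to finding an $L''_c$-coloring of $Q-v$, where $L''_c(w)=L'(w)\setminus\{c\}$ if $w$ is adjacent to $v$ in $Q$ and $L''_c(w)=L'(w)$ otherwise. To obtain this, I plan to apply the Erd\H{o}s--Rubin--Taylor theorem (Theorem~\ref{degree-choosable}) to each connected component of $Q-v$. Two observations make this work. First, $Q-v$ is a forest, since any cycle of $Q$ must pass through $v$: the stamens are internally disjoint paths meeting at $v$, and the only way they can close up into a cycle is through $v$ together with a shared second endpoint. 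Second, on each component one checks $|L''_c(w)|\ge d_{Q-v}(w)$ everywhere, with strict inequality at every leaf that is a degree-three stamen end and at every branch vertex where stamens meet. The contrapositive of the ``furthermore'' clause of Theorem~\ref{degree-choosable} then furnishes the desired coloring on each component (isolated-vertex components are handled by inspection).

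The main delicate point is the bookkeeping at shared degree-three stamen endpoints, where the lists are smallest. If $s\ge 2$ stamens meet at a common endpoint $u$, with $r\in\{0,1\}$ of them of length one (note $r\le 1$ because at most one length-one stamen $vu$ exists), then $d_Q(u)=s$, $|L'(u)|\ge s+1$, $d_{Q-v}(u)=s-r$, and $|L''_c(u)|\ge s+1-r=d_{Q-v}(u)+1$; the strict inequality required for Erd\H{o}s--Rubin--Taylor is preserved. Once this and the analogous easy check for length-one stamens (which merely create isolated vertices in $Q-v$ with nonempty lists) are in place, the plan produces the two required $L$-colorings of $G$ extending $\phi$, so $Q$ is indeed a reducible configuration.
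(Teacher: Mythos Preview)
Your proposal is correct and follows essentially the same route as the paper: reduce lists by deleting the colors of already-colored $G$-neighbors, choose two distinct colors for $v$ from its surviving list of size at least two, and then invoke the ``furthermore'' clause of Theorem~\ref{degree-choosable} on each component of $Q-v$, using that every component contains a degree-three stamen end where the list strictly exceeds the degree. Your bookkeeping at shared endpoints is more explicit than the paper's, and your forest observation, while true, is not actually needed since the Erd\H{o}s--Rubin--Taylor clause applies to arbitrary connected graphs; the one small point to tighten is that $L''_c$ should drop $c$ whenever $vw\in E(G)$ (not merely $E(Q)$) and the coloring should be taken in $G[V(Q{-}v)]$ rather than in $Q-v$---this does not affect your inequalities, since $|L''_c(w)|\ge 4-d_G(w)+d_{G[V(Q-v)]}(w)$ still gives $\ge d_{G[V(Q-v)]}(w)$ with strict inequality at degree-three ends.
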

\begin{proof}
Let $Q$ be a poppy of $(G,H)$.  Let $L$ be some 4-list-assignment of $G$, and let $\phi$ be an $L$-coloring of $G-V(Q)$.   Say $Q$ is the union of $v$ and $v$-stamens $P_1,\dots, P_k$, where $k \geq d(v)-2$.  Let $L'$ be a list-assignment for $Q$, where for every $u\in V(Q)$, $L'(u) = L(u)\backslash\{\phi(u') : uu'\in E(G), u'\in V(G)\backslash V(Q-v)\}$.  Let $\phi_1=\phi_2=\phi$, and let $\phi_1(v)\neq\phi_2(v)\in L'(v)$.  

Note that every connected component of $Q-v$ contains a vertex $u$ of degree three in $G$, so $|L'(u)| = d_{Q-v}(u)+1$.  Therefore by Theorem \ref{degree-choosable}, every connected component of $Q-v$ is $L'$-colorable.  Hence, $\phi_1$ and $\phi_2$ extend to distinct $L$-colorings of $G$, so $Q$ is a reducible configuration, as desired.
\end{proof}

If $v\in V(G)$ has degree at most two, then $v$ itself is a poppy.  Hence, Lemma \ref{super lemma} implies the following.

\begin{cor}\label{min degree 3}
If $v\in V(G)$ has degree at most two, then $v$ is a reducible configuration.
\end{cor}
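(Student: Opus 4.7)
The plan is to observe that Corollary \ref{min degree 3} follows essentially immediately from Lemma \ref{super lemma} by verifying that a single vertex $v$ of degree at most two, with $v \notin V(H)$, is itself a poppy of $(G, H)$.

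First I would unpack the definition of a poppy: $G' \subseteq G - V(H)$ is a poppy if there is some $v \in V(G')$ such that $G'$ is the union of $v$ and at least $d(v) - 2$ internally disjoint $v$-stamens. Setting $G' = \{v\}$, this reduces to requiring at least $d(v) - 2$ stamens, and when $d(v) \leq 2$ this quantity is non-positive, so the empty collection of stamens suffices. Hence $\{v\}$ trivially meets the definition of a poppy of $(G, H)$ with the vertex $v$ playing the distinguished role.

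With that observation in place, Lemma \ref{super lemma} immediately applies: every poppy of $(G, H)$ is a reducible configuration, so $\{v\}$ is a reducible configuration of $(G, H)$. This is exactly the conclusion of the corollary.

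There is essentially no obstacle here; the only subtlety worth flagging is the tacit hypothesis $v \notin V(H)$, which is forced by the fact that reducible configurations are required to be nonempty subgraphs of $G - V(H)$. So the proof is essentially a one-line deduction, checking that the inequality $d(v) - 2 \leq 0$ makes the poppy definition vacuously satisfied by the single vertex $v$.
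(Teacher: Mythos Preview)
Your proposal is correct and matches the paper's own argument exactly: the paper simply remarks that a vertex of degree at most two is itself a poppy (since $d(v)-2\leq 0$ stamens are required), and then invokes Lemma~\ref{super lemma}. Your observation about the tacit hypothesis $v\notin V(H)$ is also apt, as this is indeed implicit in the definition of a reducible configuration.
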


If $v\in V(G)$ has degree three, then a $v$-stamen in $(G,H)$ is a poppy.  Hence, Lemma \ref{super lemma} implies the following.
\begin{cor}\label{path bt 3-vertices}If $v\in V(G)\backslash V(H)$ has degree three, then a $v$-stamen is a reducible configuration of $(G,H)$.\end{cor}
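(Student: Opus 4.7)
The plan is to derive this corollary as an immediate consequence of Lemma \ref{super lemma}. Since $v$ has degree three in $G$, we have $d(v) - 2 = 1$, so the poppy definition only requires $v$ together with a single $v$-stamen. I would take an arbitrary $v$-stamen $P$ and argue that $P$ (which already contains $v$ as an endpoint) is itself a poppy with center $v$: it is the union of $v$ with one $v$-stamen, namely $P$ itself, and trivially one internally disjoint $v$-stamen suffices.

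The small verification needed is that $P \subseteq G - V(H)$. By hypothesis $v \notin V(H)$, and by the definition of a stamen, the other endpoint $u$ is a degree-three vertex of $V(G) \setminus V(H)$ and every internal vertex lies in $V(G) \setminus V(H)$ as well. So every vertex of $P$ avoids $V(H)$, and $P$ qualifies as a poppy of $(G,H)$. Applying Lemma \ref{super lemma} then yields that $P$ is a reducible configuration, which is exactly the statement of the corollary.

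There is essentially no obstacle; the only thing one must be careful about is the slightly degenerate reading of the poppy definition when the center vertex itself is an endpoint of the stamen, i.e.\ recognizing that a single $v$-stamen already qualifies as ``the union of $v$ and at least $d(v)-2$ internally disjoint $v$-stamens'' when $d(v) = 3$. Once this is observed, the conclusion follows in one line.
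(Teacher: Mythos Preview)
Your proposal is correct and follows exactly the approach the paper takes: the paper simply observes that when $d(v)=3$ a $v$-stamen is itself a poppy (being the union of $v$ with one $v$-stamen), and then invokes Lemma~\ref{super lemma}. Your check that $P\subseteq G-V(H)$ is the only detail worth spelling out, and you handle it correctly.
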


%%%%%%%%%%%%%%%%%%%%%%%%%%%%%%%%%%%%%%%%%%%%%%%%%%%%%%%%%

\section{Discharging}\label{proof section}

Before proving Theorem \ref{real thm}, we need some definitions.  In the following definitions, $G$ is a graph and $H\subsetneq G$.  

\begin{define}We say $v\in V(G)$ is a {\em $k$-vertex} if $d(v)=k$, a {\em $k^+$-vertex} if $d(v)\geq k$, and a {\em $k^-$-vertex} if $d(v)\leq k$.  If $G$ is embedded in a surface, we define a {\em $k$-face}, a {\em $k^+$-face}, and a {\em $k^-$-face} similarly.\end{define}

\begin{define}We say $v\in V(G)$ is a {\em major vertex} of $(G,H)$ if $v$ is a $5^+$-vertex, or if $v\in V(H)$.\end{define}

\begin{define}
If every vertex of a stamen $P$ of $G$ is incident with a face $f$, then we say $P$ is {\em incident with} $f$.
\end{define}

\begin{define}If $G$ is 2-cell-embedded in some surface $\Sigma$ and $f$ is a face of $G$, then the boundary of $f$ in $\Sigma$ is the union of the vertices and edges of a closed walk in $G$, which we call the {\em boundary walk} of $f$.\end{define}

If $G$ is embedded in a surface, we let $F(G)$ denote the set of faces of $G$.  If $G$ is 2-cell-embedded and $f\in F(G)$, we let $|f|$ denote the length of the boundary walk of $f$.  We are now ready to prove Theorem \ref{real thm}.

\begin{proof}[Proof of Theorem \ref{real thm}]
Suppose $G$ is a triangle-free graph embedded in a surface $\Sigma$ of Euler genus $g$, $H\subsetneq G$, and $(G,H)$ is $(\epsilon,\alpha)$-exponentially-critical with respect to some 4-list-assignment $L$, where $0\leq\epsilon\leq\frac{1}{8}$.  Let $G_1,\dots, G_m$ be the components of $G$, and let $H_i = G_i\cap H$.  
To prove Theorem \ref{real thm}, it suffices to show that for all $i=1,\dots, m$, $|V(G_i)| \leq 50(|V(H_i)| - \frac{13}{5}) + 130g_i$ when $V(H_i)\subsetneq V(G_i)$ and $g_i$ is the genus of $G_i$.  

By Proposition \ref{no reducible configurations}, $(G,H)$ has no reducible configurations of size at most $\frac{1}{\epsilon}$.  Note that a reducible configuration of $(G_i,H_i)$ is a reducible configuration of $(G,H)$.  Thus, for all $i=1,\dots, m$, $(G_i,H_i)$ has no reducible configurations of size at most $\frac{1}{\epsilon}$.
Hence, it suffices to show $|V(G)| \leq 50(|V(H)| - \frac{13}{5}) + 130g$, where $G$ is a connected triangle-free graph embedded in a surface $\Sigma$ of Euler genus $g$, $H\subsetneq G$, and $(G,H)$ contains no reducible configurations of size at most $\frac{1}{\epsilon}$.  We may assume $G$ is 2-cell-embedded in $\Sigma$, or else we embed $G$ in a surface of smaller genus.

For $v\in V(G)\backslash V(H)$, let $ch(v) = d(v) - 4$, and for $v\in V(H)$, let $ch(v) = d(v) + 3\gamma - 1$ for some fixed constant $\gamma>0$ to be determined later.  For every $f\in F(G)$, let $ch(f) = |f| - 4$.  By Euler's formula,
$$\sum_{v\in V(G)}ch(v) + \sum_{f\in F(G)}ch(f) = (3 + 3\gamma)|V(H)|+ 4(2g - 2).$$

Redistribute the charges according to the following rules, and let $ch_*$ denote the final charge.  

\begin{enumerate}
	\item Let $v$ be a major vertex, and let $u\in V(G)\backslash V(H)$ be a 3-vertex at distance at most two from $v$.  For every $v$-stamen $P$ in $G$ with an end at $u$ such that there exists a 4-face $f$ with $P$ incident with $f$, let $v$ send charge $\frac{1}{3} + \gamma$ to $u$.
	\item Let $v$ be a major vertex, and let $u\in V(G)\backslash V(H)$ be a 4-vertex at distance at most two from $v$.  For each 4-face incident to both $u$ and $v$, let $v$ send charge $\frac{3\gamma}{4}$ to $u$.
	\item If $f$ is a $5^+$-face incident to a 3-vertex $u\in V(G)\backslash V(H)$, let $f$ send charge $\frac{1}{3} + \gamma$ to $u$ for every instance of $u$ in the boundary walk of $f$.
	\item If $f$ is a $5^+$-face incident to a 4-vertex $u\in V(G)\backslash V(H)$, let $f$ send charge $\frac{3\gamma}{4}$ to $u$ for every instance of $u$ in the boundary walk of $f$.
\end{enumerate}

Figure \ref{rule 1} illustrates an instance of Rule 1.  Major vertices are represented as black circles, and non-major vertices are represented as white circles.  There are two $v$-stamens and one $v'$-stamen with ends at $u$ (shown as directed paths), and each is incident with a 4-face.  Hence, $v$ sends charge at least $\frac{2}{3} + 2\gamma$ to $u$ and $v'$ sends charge at least $\frac{1}{3} + \gamma$ to $u$ under Rule 1.

\begin{figure}
	\centering
	\begin{tikzpicture}[scale=1.5]
		\tikzstyle{every node}=[draw, circle];
		
		\node[label=45:$v$, fill] (main) at (0, 0) {};
		\node[label=45:$v'$, fill] (1) at (1, 0) {};
		\node (2) at (1, 1) {};
		\node (3) at (-1, 0) {};
		\node (4) at (-1, 1) {};
		\node[label=45:$u$] (5) at (0, 1) {};
		
		\draw [->] (main) edge (5); \draw (main) -- (1);
		\draw [->] (1) edge (2) (2) edge (5);
		\draw [->] (main) edge (3) (3) edge (4) (4) edge (5);
		
		\draw (main) -- ($(main) + (-45:.5)$); \draw (main) -- ($(main) + (-135:.5)$);
		
		%\draw (1) -- ($(1) + (-60:.5)$); \draw (1) -- ($(1) + (-30:.5)$); 
		\draw (2) -- ($(2) + (60:.5)$); \draw (2) -- ($(2) + (30:.5)$);
		
		\draw (3) -- ($(3) + (-120:.5)$); \draw (3) -- ($(3) + (-150:.5)$);
		\draw (4) -- ($(4) + (120:.5)$); \draw (4) -- ($(4) + (150:.5)$);
	\end{tikzpicture}\\%$v$ sends charge at least $1 + 3\gamma$ under Rule 1
\caption{An Example of Rule 1}
\label{rule 1}
\end{figure}
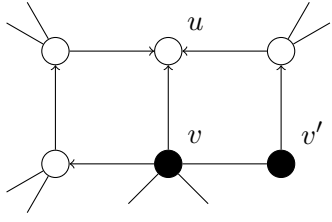

\begin{claim}\label{degree at most 4}If $u\in V(G)\backslash V(H)$ has degree at most four, $ch_*(u) \geq 3\gamma$.\end{claim}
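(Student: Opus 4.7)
The plan is to handle the two cases $d(u) = 3$ and $d(u) = 4$ separately, noting that $d(u) \ge 3$ by Corollary \ref{min degree 3} together with Proposition \ref{no reducible configurations}. A crucial structural input throughout is that every 4-face of $G$ contains a major vertex: otherwise its boundary 4-cycle would lie in $G - V(H)$ with all vertices of degree at most $4$, giving a reducible configuration of size $4 \le 1/\epsilon$ by Lemma \ref{4-face with 4-vertices} and Proposition \ref{no reducible configurations}.

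For $d(u) = 4$ we have $ch(u) = 0$, so the goal is to show that $u$ accumulates at least $3\gamma$. I walk around $u$ corner by corner. A $5^+$-face corner contributes $\frac{3\gamma}{4}$ via Rule 4. A 4-face corner at a face $f$ has some major vertex $v \neq u$ on $f$ (automatically at distance at most $2$ from $u$ via the boundary of $f$), so Rule 2 contributes $\frac{3\gamma}{4}$ from $v$. Summing over the four corners of $u$ yields $4 \cdot \frac{3\gamma}{4} = 3\gamma$, as required.

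For $d(u) = 3$ we have $ch(u) = -1$ and must receive at least $1 + 3\gamma$, i.e.\ an average of $\frac{1}{3}+\gamma$ per corner. A $5^+$-face corner again contributes $\frac{1}{3}+\gamma$ via Rule 3, so it suffices to supply $\frac{1}{3}+\gamma$ per 4-face corner through Rule 1. Two applications of Corollary \ref{path bt 3-vertices} and Proposition \ref{no reducible configurations} pin down the local structure at $u$: every neighbor of $u$ is either major or a $4$-vertex not in $H$ (otherwise the edge between two $3$-vertices not in $H$ is a size-$2$ reducible stamen), and for any 4-face $f = u a y b$ at $u$ with $a, b$ both $4$-vertices not in $H$, the diagonal vertex $y$ is also major or a $4$-vertex not in $H$ (otherwise $u a y$ is a size-$3$ reducible stamen). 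Since each 4-face at $u$ has some major vertex in $\{a, y, b\}$, I identify Rule~1 stamens for each 4-face $f = u a y b$ at $u$ as follows: if $y$ is major then $u a y$ and $u b y$ are two distinct $y$-stamens whose vertices lie in $f$; if $a$ (symmetrically $b$) is major then $u a$ is a length-$1$ $a$-stamen that may be shared with the other 4-face across edge $u a$, but in that case the length-$3$ path $u b y a$ is an additional $a$-stamen contained in $f$. A short case check stratified by the number of 4-face corners and by which vertex of each $f$ is major shows that the total number of distinct Rule~1 stamens produced is at least the number of 4-face corners at $u$, delivering $\frac{1}{3}+\gamma$ per such corner. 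The main obstacle is precisely this bookkeeping when short length-$1$ stamens are shared between adjacent 4-faces at $u$; the length-$3$ stamens within each 4-face, enabled by the structural observations above, are what resolve the deficit.
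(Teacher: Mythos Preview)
Your argument is correct and follows the same outline as the paper's proof: split into $d(u)=4$ and $d(u)=3$, use Lemma~\ref{4-face with 4-vertices} to guarantee a major vertex on every $4$-face, and then extract one unit of charge per incident face via Rules~2/4 (for $4$-vertices) or Rules~1/3 (for $3$-vertices). The only substantive difference is packaging in the $3$-vertex case. The paper observes uniformly that each $4$-face incident to $u$ contains two internally disjoint $v$-stamens ending at $u$ (walk along the boundary in each direction until the first major vertex), and then notes that any such stamen is incident with at most two $4$-faces; a one-line double count then gives at least $k$ distinct Rule~1 stamens when $u$ lies on $k$ four-faces. This replaces your stratified case check entirely. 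Your explicit construction reaches the same stamens, but as written it needs a little tightening: the assertion ``if $y$ is major then $uay$ and $uby$ are $y$-stamens'' requires $a,b$ to be non-major (so they are $4$-vertices of $G-V(H)$), and ``$ubya$ is an $a$-stamen'' needs both $b$ and $y$ non-major; you have the structural lemmas to justify this, but the case split should be organized as ``first major vertex encountered along each boundary direction'' rather than ``which of $a,y,b$ is major'' to avoid these gaps.
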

\begin{proof}
First suppose $u$ is a 4-vertex.  Note that $u$ sends no charge under Rules 1-4.  By Lemma \ref{4-face with 4-vertices}, every 4-face $f$ incident to $u$ contains a major vertex $v_f$.
Therefore, if $u$ is adjacent to $k$ 4-faces, $u$ receives at least $\frac{3k\gamma}{4}$ charge under Rule 2.  By Rule 4, $u$ receives $\frac{3(4-k)\gamma}{4}$ charge from $5^+$-faces.  Hence, $u$ receives at least $3\gamma$ charge, as desired.

Therefore we may assume $u$ is a 3-vertex.  Note that $u$ sends no charge under Rules 1-4.  By Lemma \ref{4-face with 4-vertices}, every 4-face $f$ incident to $u$ contains a major vertex.  Hence, for every 4-face $f$ incident to $u$, there are two internally disjoint stamens $P_1$ and $P_2$ with an end at $u$ and an end at a major vertex such that every vertex in $P_1$ and $P_2$ is incident to $f$.  Note that a stamen is incident with at most two 4-faces.

Therefore, if $u$ is adjacent to $k$ 4-faces, $u$ receives at at least $\frac{k(1 + 3\gamma)}{3}$ charge under Rule 1.  By Rule 3, $u$ receives $\frac{(3 - k)(1 + 3\gamma)}{3}$ charge from $5^+$-faces.  Hence, $u$ receives at least $1 + 3\gamma$ charge, as desired.
\end{proof}

\begin{claim}\label{degree at least 7}If $v\in V(G)\backslash V(H)$ has degree at least seven and $\gamma\leq \frac{2}{13}$, $ch_*(v) \geq \frac{2}{3} - \frac{91\gamma}{4}$.\end{claim}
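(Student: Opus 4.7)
Since $v \notin V(H)$ has degree at least seven, its initial charge is $ch(v) = d(v) - 4 \geq 3$, and because $v$ is a $5^+$-vertex, none of Rules $1$--$4$ sends charge \emph{to} $v$; the task is to upper bound the charge $v$ sends via Rules $1$ and $2$.  The plan is to show this total is at most $d(v) - \frac{14}{3} + \frac{91\gamma}{4}$, which immediately yields $ch_*(v) \geq (d(v)-4) - \left(d(v) - \frac{14}{3} + \frac{91\gamma}{4}\right) = \frac{2}{3} - \frac{91\gamma}{4}$.

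My approach is a per-face analysis.  For a $5^+$-face incident to $v$, nothing is sent via Rules $1$ or $2$, since both require a $4$-face witness.  For a $4$-face $f = v, a, b, c$ around $v$, I would classify $(a, b, c)$ by whether each vertex is a $3$-vertex of $V(G) \setminus V(H)$, a $4$-vertex of $V(G) \setminus V(H)$, or a major vertex, and compute the charge $v$ sends associated with $f$.  A direct case inspection shows that the maximum per-face charge equals $\frac{2}{3} + \frac{7\gamma}{2}$, attained when $a, c$ are $4$-vertices not in $V(H)$ and $b$ is a $3$-vertex not in $V(H)$: the length-two stamens $vab$ and $vcb$ both fire Rule $1$ for a total of $\frac{2}{3} + 2\gamma$, and Rule $2$ sends $\frac{3\gamma}{4}$ to each of $a$ and $c$.

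The key structural input comes from Proposition \ref{no reducible configurations} combined with Corollary \ref{path bt 3-vertices}: there is no stamen of size at most $1/\epsilon = 8$ whose $3$-vertex endpoint lies in $V(G) \setminus V(H)$.  This rules out several high-charge adjacencies around $v$.  For instance, two consecutive $4$-faces around $v$ cannot both attain the worst configuration, for the shared $4$-vertex $c_i = a_{i+1}$ would join the two opposite $3$-vertices $b_i, b_{i+1}$ in a size-$3$ stamen.  Analogously, a worst $4$-face adjacent to one in which $a$ is a $4$-vertex non-$H$, $b$ is a $4$-vertex non-$H$, and $c$ is a $3$-vertex non-$H$ forces a size-$4$ stamen; the pattern worst--all-$4$-vertex-non-$H$--worst forces a size-$5$ stamen; and extending by one more all-$4$-vertex-non-$H$ face in the middle gives a size-$7$ stamen.

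Combining the per-face charge catalog with the above forbidden adjacencies, I would show by amortization that the total charge sent is at most $d(v) - \frac{14}{3} + \frac{91\gamma}{4}$ whenever $\gamma \leq \frac{2}{13}$.  The heuristic is that the worst configuration carries an excess of $\frac{1}{3} + \frac{\gamma}{4}$ over a per-face target of $\frac{1}{3} + \frac{13\gamma}{4}$, but its two $4$-face neighbors are forced into configurations with charge at most $\frac{3\gamma}{2}$ (essentially $a, c$ both $4$-vertex non-$H$ and $b$ major), providing enough compensating slack.  The hardest part is the systematic case enumeration, in particular the correct attribution of length-one stamens between their two incident $4$-faces and the verification that the global forbidden-stamen constraints are strong enough to make the amortization tight at the boundary $d(v) = 7$ with $\gamma = \frac{2}{13}$.
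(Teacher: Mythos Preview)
Your approach is much more complicated than the paper's, and the sketch leaves the hardest step unfinished. The paper's proof is essentially two lines. The key observation you are missing is that the charge in Rule~1 is sent \emph{once per $v$-stamen}, not once per incident $4$-face, so the right thing to count is stamens, not faces. The paper shows that any two distinct $v$-stamens $P_1,P_2$ that each lie in a $4$-face must be internally disjoint: if they shared the edge $vv'$, then $P_1\triangle P_2$ would be a short stamen between two $3$-vertices, contradicting Corollary~\ref{path bt 3-vertices}. Internal disjointness forces each such stamen to use a distinct edge at $v$, so there are at most $d(v)$ of them, giving a Rule~1 bound of $d(v)\bigl(\tfrac{1}{3}+\gamma\bigr)$. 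Rule~2 is bounded crudely by $d(v)\cdot 3\cdot\tfrac{3\gamma}{4}=d(v)\tfrac{9\gamma}{4}$ (at most $d(v)$ incident $4$-faces, at most three $4$-vertices on each). Then
\[
ch_*(v)\ \ge\ d(v)-4-d(v)\Bigl(\tfrac{1}{3}+\tfrac{13\gamma}{4}\Bigr)\ =\ d(v)\Bigl(\tfrac{2}{3}-\tfrac{13\gamma}{4}\Bigr)-4\ \ge\ \tfrac{2}{3}-\tfrac{91\gamma}{4},
\]
using $d(v)\ge 7$ and $\gamma\le\tfrac{2}{13}$.

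By contrast, your per-face decomposition has to wrestle with exactly the attribution problem you flag at the end: a single length-one stamen $vu$ is incident with two faces but fires Rule~1 only once, and a length-two or length-three stamen can also be incident with two $4$-faces. Your catalog of worst configurations is incomplete (e.g.\ $a$ a $3$-vertex with $b,c$ $4$-vertices also gives the length-$1$ stamen $va$ plus the length-$3$ stamen $vcba$, matching your claimed maximum $\tfrac{2}{3}+\tfrac{7\gamma}{2}$), and the amortization via forbidden adjacency patterns is only asserted, not carried out. It may well be completable, but it is doing a lot of work to recover a bound that follows immediately from the global observation that the relevant stamens are edge-disjoint at $v$.
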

\begin{proof}
Let $P_1$ and $P_2$ be distinct $v$-stamens that are each incident with a 4-face.  Suppose $vv'\in E(P_1)\cap E(P_2)$.  Then $E(P_1)\cap E(P_2) = \{vv'\}$, and $P_1\triangle P_2$ is a $u$-stamen of length at most five, where $u$ is an end of $P_1$, contradicting Corollary \ref{path bt 3-vertices}.  Hence, $P_1$ and $P_2$ are internally disjoint.  Therefore $v$ sends charge at most $d(v)(\frac{1}{3}+\gamma)$ to 3-vertices under Rule 1.
Note that $v$ sends at most $d(v)\frac{9\gamma}{4}$ charge to 4-vertices under Rule 2.  Therefore $v$ sends charge at most $d(v)\left(\frac{1}{3} + \gamma + \frac{9\gamma}{4}\right)$.  Since $\gamma \leq \frac{2}{13},$

$$ch_*(v) \geq d(v) - 4 - d(v)\left(\frac{1}{3} + \gamma + \frac{9\gamma}{4}\right) = d(v)\left(\frac{2}{3} - \frac{13\gamma}{4}\right) - 4 \geq \frac{2}{3} - \frac{91\gamma}{4},$$
as desired.
\end{proof}

\begin{claim}\label{degree 6}If $v\in V(G)\backslash V(H)$ has degree six, then $ch_*(v) \geq \frac{2}{3} - \frac{35\gamma}{2}$.\end{claim}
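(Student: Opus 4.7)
My plan is to bound the total charge sent by $v$ under Rules 1 and 2, strengthening the naive bound via the reducibility of small poppies at $v$. The initial charge is $ch(v) = d(v) - 4 = 2$. As in the proof of Claim \ref{degree at least 7}, distinct $v$-stamens use pairwise distinct edges incident to $v$; let $a$ and $b$ denote the number of Rule 1 eligible $v$-stamens of length one and length two respectively. Then Rule 1 sends at most $(a + b)(1/3 + \gamma)$, and Rule 2 sends at most $9\gamma/4$ per 4-face at $v$, for a Rule 2 bound of at most $27\gamma/2$.

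The crux is to show $a + b \leq 4$. Since $d(v) - 2 = 4$, any four internally disjoint $v$-stamens together with $v$ form a poppy, reducible by Lemma \ref{super lemma}; by Proposition \ref{no reducible configurations}, such a poppy must have more than $1/\epsilon = 8$ vertices. If $a \geq 1$, then any four stamens including a length-one one form a poppy on at most $1 + 1 + 2 \cdot 3 = 8$ vertices, so $a + b \leq 3$. Now suppose $a = 0$ and $b \geq 5$. If any two length-two stamens share a 3-vertex endpoint, a four-stamen sub-poppy containing both has at most three distinct endpoints, yielding at most $1 + 4 + 3 = 8$ vertices, reducible; so the $b \geq 5$ stamens must have pairwise distinct 3-vertex endpoints. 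Applying Corollary \ref{path bt 3-vertices} to the length-two path $u_1 w u_2$ through a 4-vertex $w$ with two 3-vertex neighbors $u_1, u_2$ (all not in $H$)---a reducible $u_1$-stamen of size three, forbidden by Proposition \ref{no reducible configurations}---shows that every 4-vertex neighbor of $v$ (not in $H$) has at most one 3-vertex neighbor not in $H$. Consequently the 3-vertex distance-two endpoints form an independent set in the cyclic adjacency graph $G_v$ on the six distance-two vertices around $v$ whose edges record ``share a common 4-vertex neighbor of $v$''. Since each of the $b \geq 5$ stamens contributes its own active 4-vertex neighbor of $v$, and hence its own edge to $G_v$, the graph $G_v$ has at least five edges; but any subgraph of the 6-cycle with at least five edges (the 6-cycle itself or a 6-vertex path) has maximum independent set of size three, contradicting the existence of $b \geq 5$ distinct endpoints. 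Hence $b \leq 4$, so $a + b \leq 4$ in either case.

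With $a + b \leq 4$, Rule 1 sends at most $4(1/3 + \gamma) = 4/3 + 4\gamma$, and combined with the Rule 2 bound of $27\gamma/2$, the total charge sent by $v$ is at most $4/3 + 4\gamma + 27\gamma/2 = 4/3 + 35\gamma/2$. Therefore $ch_*(v) \geq 2 - (4/3 + 35\gamma/2) = 2/3 - 35\gamma/2$, as required. The main obstacle is ruling out $b \geq 5$ in the case $a = 0$: the naive bound permits up to six length-two stamens and falls short by roughly $2/3$, and the required reduction combines the sub-poppy argument (forcing distinct endpoints) with the Corollary \ref{path bt 3-vertices} constraint (whose combined effect caps the independent set at four via the edge count of the cyclic constraint graph).
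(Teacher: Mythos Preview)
Your argument has a genuine gap: Rule~1 is triggered not only by $v$-stamens of length one and two, but also by $v$-stamens of length three. A Rule~1 eligible stamen must be incident with a $4$-face, so it has at most four vertices; if $P = v w_1 w_2 u$ is such a stamen on a $4$-face $f$, then $f = v w_1 w_2 u$ and in particular $vu$ is an edge, so $u$ is at distance one from $v$ and Rule~1 applies to $P$ as well as to the length-one stamen $vu$. Thus your bound ``Rule~1 sends at most $(a+b)(\tfrac13+\gamma)$'' is simply false in general, and your case $a\ge 1$ does nothing to control the number $c$ of length-three stamens. Concretely, the length-one stamen $vu$ and the length-three stamen $v w_1 w_2 u$ use different edges at $v$ and are counted separately under Rule~1; a single $4$-face containing $vu$ already produces two units of charge $\tfrac13+\gamma$ sent to $u$. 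This is precisely why the paper's proof tracks stamens with $|V(P_i)|\in\{2,3,4\}$ and carries out a careful lexicographic case analysis rather than just bounding $a+b$.

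A second, smaller issue: your independent-set argument via the ``cyclic adjacency graph $G_v$'' is not well defined as written. You speak of ``the six distance-two vertices around $v$'', but there is no reason all six faces at $v$ are $4$-faces, so not every neighbour $w_i$ of $v$ is flanked by two such distance-two vertices, and hence $w_i$ need not contribute an edge to $G_v$. The conclusion $b\le 4$ when $a=0$ is in fact correct (note that $a=0$ forces $c=0$, since any length-three stamen forces a length-one one), and can be obtained more directly: each stamen $vw_iu_i$ sits on a $4$-face $v w_i u_i x_i$ with $x_i$ a neighbour of $v$; if $x_i = w_j$ for some $j\neq i$ then $w_j$ has two $3$-vertex neighbours $u_i,u_j$, contradicting Corollary~\ref{path bt 3-vertices}; hence all $x_i$ equal the sixth neighbour of $v$, forcing the edge $vx_i$ to lie on five distinct $4$-faces, which is impossible. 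But this still leaves the $a\ge 1$ case unhandled, and there the missing length-three stamens are exactly what the paper's case analysis is designed to address.
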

\begin{proof}
Suppose $v$ sends charge at most $\frac{4}{3} + 4\gamma$ to 3-vertices under Rule 1.  Note that $v$ sends at most $d(v)\frac{9\gamma}{4} = \frac{27\gamma}{2}$ charge to 4-vertices under Rule 2.  Hence,

$$ch_*(v) \geq 2 - \left(\frac{4}{3} + 4\gamma\right) - \frac{54\gamma}{4} = \frac{2}{3} - \frac{35\gamma}{2},$$
as desired.

Therefore we may assume that $v$ sends greater than $\frac{4}{3} + 4\gamma$ charge to 3-vertices.  Then by Rule 1, there exist at least five $v$-stamens of $G$ $P_1,\dots, P_5$, where $u_i\neq v$ is an end of $P_i$, and each $P_i$ is incident with a 4-face, $f_i$.  Since $\epsilon \leq \frac{1}{5}$, by Corollary \ref{path bt 3-vertices}, the $P_i$ are pairwise internally disjoint.  Let $Q = \cup_{i=1}^4 P_i$.  We choose $P_1,\dots, P_5$ such that $(|V(P_1)|,\dots, |V(P_5)|)$ is lexicographically minimum over all $v$-stamens of $G$, and subject to that, $|V(Q)|$ is minimum.  Note that $Q$ is a poppy of $G$.  Since $\epsilon\leq\frac{1}{8}$, by Lemma \ref{super lemma}, $|V(Q)| > 8$.  Note that for all $i=1,\dots, 5$, $2\leq |V(P_i)|\leq 4$.  Furthermore, if $|V(P_i)| = 4$, then $v$ is adjacent to $u_i$, so there exists $j<i$ such that $u_j = u_i$ and $|V(P_j)| = 2$.

First we claim that $|V(P_2)| > 2$.  Suppose not.  Then $|V(P_1)| = |V(P_2)| = 2$.  If $|V(P_3)| = 3$, then since $v\in V(P_i)$ for all $i$, $|V(Q)|\leq 8$, a contradiction.  Therefore for $i=3,4,5$, $|V(P_i)| = 4$.  Since $|V(Q)|$ is minimum, $u_3$ is either $u_1$ or $u_2$.  Hence, $|V(Q)| \leq 8$, a contradiction.  Therefore $|V(P_2)| > 2$, as claimed.

We claim that $|V(P_1)| > 2$.  Suppose not.  Since $v\in V(P_i)$ for all $i$ and $|V(Q)| > 8$, $|V(P_4)| = 4$.  Since $|V(Q)|$ is minimum, $u_4 = u_1$.  Since $|V(Q)|\leq 8$, $|V(P_3)| = 4$.  Since $|V(P_2)| > 2$, $u_3 = u_1$.  Since $|V(Q)|\leq 8$, $|V(P_2)| = 4$.  Hence, $u_2 = u_1$, contradicting that $u_1$ has degree three.  Therefore $|V(P_1)| > 2$, as claimed.

Thus $|V(P_i)| > 2$ for all $i=1,\dots, 5$.  But then $|V(P_i)|\neq 4$ for all $i$.  Hence, $|V(P_i)| = 3$ for all $i=1,\dots, 5$.  Since $|V(Q)| > 8$ and $|V(Q)|$ is minimum, $u_1,\dots, u_5$ are distinct.  For each $i=1,\dots, 5$, let $w_i\in V(P_i)\backslash\{v,u_i\}$.  If there exists $i,j$ such that $i\neq j$ and $w_i$ is adjacent to $u_j$, then $u_iwu_j$ is a $u_i$-stamen, contradicting Corollary \ref{path bt 3-vertices}.  Therefore $w_1,\dots, w_5$ are distinct, and since the $u_1,\dots, u_5$ are distinct, $f_1,\dots, f_5$ are distinct.  But each $w_i$ is incident with at least two 4-faces that are incident to $v$.  Since $v$ is incident with at most six 4-faces, there exists some face $f$ incident to $v$ such that for all $i=1,\dots, 5$, $f\neq f_i$ and $w_i$ is incident with $f$.  Therefore for some $i\neq j$, $w_i=w_j$, a contradiction.
This completes the proof.\end{proof}

\begin{claim}\label{degree 5}If $v\in V(G)\backslash V(H)$ has degree five, then $ch_*(v) \geq \frac{1}{3} - \frac{53\gamma}{4}.$\end{claim}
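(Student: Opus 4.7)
The plan is to follow the template of Claim \ref{degree 6}: bound the charge sent out under Rule 2 crudely by counting 4-faces at $v$, and show that Rule 1 applies to at most two $v$-stamens. Since $ch(v)=1$, these two bounds will combine to give the claim.

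First, I would bound the charge sent out under Rule 2. Since $d(v)=5$, $v$ is incident with at most five 4-faces, and each such face has at most three vertices other than $v$, all at distance at most two from $v$. Thus Rule 2 sends at most $5\cdot 3\cdot\frac{3\gamma}{4}=\frac{45\gamma}{4}$ from $v$.

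Next, I would argue that at most two $v$-stamens contribute under Rule 1. Suppose for contradiction that three such stamens $P_1,P_2,P_3$ exist, each ending at a 3-vertex $u_i\notin V(H)$ at distance at most two from $v$, and each incident with a 4-face $f_i$. As in the first paragraph of the proof of Claim \ref{degree 6}, Corollary \ref{path bt 3-vertices} implies that $P_1,P_2,P_3$ are pairwise internally disjoint. Since $d(v)-2=3$, the subgraph $Q=\{v\}\cup P_1\cup P_2\cup P_3$ is a poppy of $(G,H)$, so by Lemma \ref{super lemma} and Proposition \ref{no reducible configurations}, $|V(Q)|>8$. I would then derive a contradiction by showing $|V(Q)|\leq 8$. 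Whenever $|V(P_i)|=4$, the 4-face $f_i$ must be bounded by $P_i$, forcing $vu_i\in E(G)$; then the length-2 stamen $vu_i$ is also incident with $f_i$ and contributes under Rule 1. Choosing $P_1,P_2,P_3$ so that $(|V(P_1)|,|V(P_2)|,|V(P_3)|)$ is lexicographically minimum over all such triples, I may assume $|V(P_i)|\in\{2,3\}$ for each $i$. Triangle-freeness prevents two stamens of different lengths ending at the same $u_i$ (such a pair forms a triangle $v w u_i$), and there is at most one length-2 stamen to each 3-vertex. A short case analysis on which of $u_1,u_2,u_3$ coincide then yields $|V(Q)|\leq 7$ in every case (with the extremal case being three length-3 stamens to distinct 3-vertices), giving the contradiction.

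Combining, Rule 1 sends at most $2(\frac{1}{3}+\gamma)=\frac{2}{3}+2\gamma$ from $v$, while Rule 2 sends at most $\frac{45\gamma}{4}$, so $ch_*(v)\geq 1-\frac{2}{3}-2\gamma-\frac{45\gamma}{4}=\frac{1}{3}-\frac{53\gamma}{4}$, as desired. The main obstacle is the poppy-size analysis, which requires careful bookkeeping of length-4 stamens (where one must pass to a length-2 stamen at the same endpoint) and of the various ways in which the three 3-vertex endpoints can coincide, with the triangle-free hypothesis ruling out the otherwise problematic configurations.
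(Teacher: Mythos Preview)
Your approach is the same as the paper's: bound Rule~2 by $d(v)\cdot\frac{9\gamma}{4}=\frac{45\gamma}{4}$, then show that assuming three qualifying $v$-stamens exist yields a poppy of size at most $7<\frac{1}{\epsilon}$. However, one step in your outline is not correct as stated.

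You write that, after choosing $(|V(P_1)|,|V(P_2)|,|V(P_3)|)$ lexicographically minimum, you may assume $|V(P_i)|\in\{2,3\}$ for each $i$. This does not follow. When $|V(P_i)|=4$ you correctly observe that $vu_i\in E(G)$ and that the length-$1$ stamen $vu_i$ is also incident with $f_i$; but the replacement $P_i\mapsto vu_i$ is blocked precisely when $vu_i$ already equals one of the other $P_j$. So the lex-min triple can genuinely have $|V(P_3)|=4$ (with $P_1=vu_3$). Your triangle-freeness observation only excludes a length-$2$ and a length-$3$ stamen to the same endpoint, so it does not rescue the reduction, and in fact the paper never invokes triangle-freeness here.

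The paper's argument avoids this reduction entirely. From lex-minimality it records only the weaker fact that if $|V(P_i)|=4$ then some $j<i$ has $u_j=u_i$ and $|V(P_j)|=2$. Since $|V(Q)|>8$ and $v$ is common to all three paths, one gets $|V(P_1)|+|V(P_2)|+|V(P_3)|>10$; with $|V(P_2)|,|V(P_3)|\le 4$ this forces $|V(P_1)|\ge 3$, hence no $P_j$ has two vertices, hence no $P_i$ has four, so all three have three vertices and $|V(Q)|\le 7$. This is both shorter and sidesteps the case analysis on coincident endpoints that you anticipated as the main obstacle.
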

\begin{proof}
Suppose $v$ sends charge at most $\frac{2}{3} + 2\gamma$ to 3-vertices under Rule 1.  Note that $v$ sends at most $d(v)\frac{9\gamma}{4} = \frac{45\gamma}{4}$ charge to 4-vertices under Rule 2.  Hence,

$$ch_*(v) \geq 1 - \left(\frac{2}{3} + 2\gamma\right) - \frac{45\gamma}{4} = \frac{1}{3} - \frac{53\gamma}{4},$$
as desired.

Therefore we may assume that $v$ sends greater than $\frac{2}{3} + 2\gamma$ charge to 3-vertices.  Then by Rule 1, there exist $v$-stamens $P_1, P_2,$ and $P_3$, where $u_i\neq v$ is an end of $P_i$, and each $P_i$ is incident with a 4-face, $f_i$.  Since $\epsilon \leq \frac{1}{5}$, by Corollary \ref{path bt 3-vertices}, the $P_i$ are pairwise internally disjoint.  

We choose $P_1,P_2,$ and $P_3$ such that $(|V(P_1),|V(P_2)|, |V(P_3)|)$ is lexicographically minimum over all $v$-stamens of $G$.  Let $Q=\cup_{i=1}^3P_i$.  Note that $Q$ is a poppy of $G$.  Since $\epsilon\leq\frac{1}{8}$, by Lemma \ref{super lemma}, $|V(Q)| > 8$.  Note that for all $i=1,2,3$, $2\leq |V(P_i)|\leq 4$.  Furthermore, if $|V(P_i)| = 4$, then $v$ is adjacent to $u_i$, so there exists $j<i$ such that $u_j=u_i$ and $|V(P_i)| = 2$.  Since $v\in V(P_i)$ for all $i$ and $|V(Q)| > 8$, $|V(P_1)| + |V(P_2)| + |V(P_3)| > 10$.  Since $|V(P_2)|,|V(P_3)|\leq 4$, $|V(P_1)| > 2$.  Hence, $|V(P_i)| = 3$ for all $i=1,2,3$.  Then $|V(Q)|\leq 7$, a contradiction.  This completes the proof.
\end{proof}

\begin{claim}\label{X major}If $v\in V(H)$ and $\gamma\leq\frac{2}{13}$, then $ch_*(v) \geq \min\{3\gamma,\frac{1}{3} - \frac{7\gamma}{2}\}$.
\end{claim}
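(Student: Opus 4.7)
The plan is to mirror the pattern of Claims \ref{degree at least 7}--\ref{degree 5}, adapted to the modified initial charge $ch(v) = d(v) + 3\gamma - 1$ of $v \in V(H)$. The key preliminary observation is that $v$ receives no charge under any of Rules 1--4: Rules 3 and 4 deposit only on non-$H$ vertices, and in Rules 1 and 2 the recipient is explicitly required to lie in $V(G) \setminus V(H)$. So I only need an upper bound on the charge $v$ sends.

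First I would reuse the symmetric-difference argument from Claim \ref{degree at least 7}: if two $v$-stamens incident with a 4-face shared an edge $vv'$ at $v$, their symmetric difference would be a stamen of length at most $4 + 4 - 2 = 6 \le \frac{1}{\epsilon}$ between two degree-$3$ non-$H$ vertices, contradicting Proposition \ref{no reducible configurations} via Corollary \ref{path bt 3-vertices}. Hence Rule 1 fires at most $d(v)$ times, contributing at most $d(v)(\frac{1}{3} + \gamma)$. Rule 2 contributes at most $\frac{9\gamma}{4}$ per 4-face at $v$ (each such face has at most three other vertices), and $v$ lies on at most $d(v)$ distinct 4-faces, for a total of at most $\frac{9 d(v) \gamma}{4}$. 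Combining gives
\[
ch_*(v) \ge d(v) \left( \frac{2}{3} - \frac{13\gamma}{4} \right) + 3\gamma - 1.
\]
Under $\gamma \le \frac{2}{13}$ the coefficient of $d(v)$ is at least $\frac{1}{6} > 0$, so the right-hand side is monotone in $d(v)$; substituting $d(v) = 2$ yields $ch_*(v) \ge \frac{1}{3} - \frac{7\gamma}{2}$, one side of the claimed minimum, whenever $d(v) \ge 2$.

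The main obstacle is the case $d(v) = 1$: here $ch(v) = 3\gamma$, so matching the $3\gamma$ side of the minimum requires showing that $v$ sends nothing. The unique face incident with a leaf $v$ has boundary walk that traverses the pendant edge $vw$ twice, so any $v$-stamen incident with a 4-face, and any 4-face incident with $v$, is confined to a degenerate configuration on at most three distinct vertices $\{v, w, u_2\}$. A careful analysis of that configuration, combining Corollary \ref{min degree 3} (a non-$H$ vertex of degree at most two is itself a reducible configuration of size one) with Corollary \ref{path bt 3-vertices} and Lemma \ref{super lemma}, will force every candidate recipient either into $V(H)$, where Rules 1 and 2 do not deposit charge, or to a degree incompatible with being a Rule 1 or Rule 2 recipient. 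Consequently Rules 1 and 2 do not fire at $v$, giving $ch_*(v) = 3\gamma$, which completes the proof.
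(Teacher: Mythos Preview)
Your argument for $d(v)\ge 2$ is exactly the paper's: bound the charge sent under Rule~1 by $d(v)\bigl(\tfrac13+\gamma\bigr)$ via pairwise internal disjointness of the relevant $v$-stamens, bound Rule~2 by $\tfrac{9\gamma}{4}\,d(v)$, and evaluate the resulting linear expression at $d(v)=2$ using $\gamma\le\tfrac{2}{13}$.

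The $d(v)=1$ case, however, is only a sketch: ``a careful analysis \dots\ will force'' is a promissory note, not a proof, and the recipient-by-recipient case analysis you outline is more involved than necessary. The paper handles this case more directly. Rather than examining who might receive charge, it argues that a degree-$1$ vertex of a simple graph is incident with no $4$-face at all except in one degenerate situation: the boundary walk of the unique face at a leaf traverses the pendant edge $vw$ twice, so for that walk to have length~$4$ the remaining two boundary edges must be a single bridge $wu$ traversed twice, which forces $G$ itself to be the path $v\,w\,u$. That degenerate graph is then ruled out because $H\subsetneq G$ yields a vertex of degree at most two in $V(G)\setminus V(H)$, a reducible configuration of size one by Corollary~\ref{min degree 3}. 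Hence $v$ lies on no $4$-face, Rules~1 and~2 are vacuous at $v$, and $ch_*(v)=ch(v)=3\gamma$ immediately. You should replace your last paragraph with this one-line structural observation rather than invoking Corollary~\ref{path bt 3-vertices} and Lemma~\ref{super lemma}, which are not needed here.
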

\begin{proof}
If $v$ is a 1-vertex, then since $G$ is simple, $v$ is not incident to a 4-face unless $G$ is the path of length three.  Since $H$ is a proper subgraph of $G$, there is a vertex of degree at most two in $V(G)\backslash V(H)$, contradicting Corollary \ref{min degree 3}.  Therefore $G$ is not the path of length three, so $v$ is not incident to a 4-face.  Hence, $v$ sends no charge under Rules 1-4, so $ch_*(v)\geq 3\gamma$, as desired.  

Therefore we may assume $d(v)\geq 2$.  Since $\epsilon\leq\frac{1}{5}$, by Corollary \ref{path bt 3-vertices}, if $P_1$ and $P_2$ are distinct $v$-stamens that are each incident with a 4-face, then $P_1$ and $P_2$ are internally disjoint.  Therefore $v$ sends charge at most $d(v)(\frac{1}{3}+\gamma)$ to 3-vertices under Rule 1.
Note also that $v$ sends charge at most $d(v)\frac{9\gamma}{4}$ to 4-vertices under Rule 2.  Therefore,

$$ch_*(v) \geq d(v) + 3\gamma - 1 - d(v)\left(\frac{1}{3} + \gamma + \frac{9\gamma}{4}\right) 
= d(v)\left(\frac{2}{3} - \frac{13\gamma}{4}\right) + 3\gamma - 1 \geq \frac{1}{3} - \frac{7\gamma}{2},$$
as desired.
\end{proof}

\begin{claim}\label{faces}If $f\in F(G)$ and $\gamma \leq \frac{1}{15}$, then $ch_*(f)\geq 0$.\end{claim}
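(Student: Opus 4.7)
The plan is to split on whether $|f|=4$ or $|f|\geq 5$; since $G$ is triangle-free these are the only possibilities. If $|f|=4$, then $ch(f)=0$ and $f$ sends no charge, since Rules 3 and 4 apply only to $5^+$-faces while Rules 1 and 2 govern only vertex-to-vertex transfers; hence $ch_*(f)=0$ and this case is done.

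For $|f|\geq 5$, let $k_3$ (respectively $k_4$) denote the number of occurrences, counted with multiplicity, of 3-vertices (respectively 4-vertices) of $V(G)\setminus V(H)$ in the boundary walk of $f$. By Rules 3 and 4 the total charge $f$ sends out is at most $k_3\left(\frac{1}{3}+\gamma\right)+k_4\cdot\frac{3\gamma}{4}$, and trivially $k_4\leq|f|-k_3$, so I may upper-bound the charge sent by $k_3\left(\frac{1}{3}+\gamma\right)+(|f|-k_3)\cdot\frac{3\gamma}{4}$, which is monotone in $k_3$.

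The key structural input is that two vertices $u,v\in V(G)\setminus V(H)$, both of degree $3$ in $G$, cannot be adjacent: the edge $uv$ would be a $v$-stamen of size $2$, a reducible configuration by Corollary \ref{path bt 3-vertices}, contradicting Proposition \ref{no reducible configurations} since $\frac{1}{\epsilon}\geq 8>2$. Consecutive vertices of the boundary walk of $f$ are adjacent in $G$, so, viewing the boundary walk cyclically, this forces $k_3\leq\lfloor|f|/2\rfloor$.

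Plugging this into the charge estimate and comparing with $ch(f)=|f|-4$, the claim reduces to a linear inequality in $|f|$ that is routine to verify for $\gamma\leq\frac{1}{15}$. The tightest case is $|f|=5$, where $k_3\leq 2$ yields charge sent at most $\frac{2}{3}+2\gamma+\frac{9\gamma}{4}=\frac{2}{3}+\frac{17\gamma}{4}$, which is at most $1=ch(f)$ precisely when $\gamma\leq\frac{4}{51}$, a condition comfortably implied by $\gamma\leq\frac{1}{15}$. For $|f|\geq 6$, substituting $k_3\leq|f|/2$ shows the slack $ch(f)-\text{(charge sent)}\geq|f|\left(\frac{5}{6}-\frac{7\gamma}{8}\right)-4$ grows linearly in $|f|$ and is nonnegative already at $|f|=6$ for $\gamma\leq\frac{1}{15}$, so no additional work is needed. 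I do not anticipate any serious obstacle: the entire proof is driven by the adjacent-3-vertex restriction already supplied by Corollary \ref{path bt 3-vertices}, and the rest is a short arithmetic check concentrated in the boundary case $|f|=5$.
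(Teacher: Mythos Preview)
Your proof is correct and follows essentially the same approach as the paper: both hinge on the observation (via Corollary~\ref{path bt 3-vertices}) that no two $3$-vertices of $V(G)\setminus V(H)$ are adjacent, which bounds the number of $3$-vertex occurrences on the boundary walk by $\lfloor |f|/2\rfloor$. Your organization is slightly cleaner---by counting occurrences with multiplicity and applying the $k_3\le\lfloor|f|/2\rfloor$ bound uniformly for all $|f|\ge 5$, you avoid the paper's separate treatment of $|f|\ge 8$ (where it uses the cruder bound $k_3\le|f|$) and its side remark that each $3$-vertex appears at most once on small faces.
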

\begin{proof}
Let $f\in F(G)$.  If $|f| = 4$, then $f$ sends no charge under Rules 1-4.  Therefore $ch_*(f)\geq0$, as desired.

Suppose $|f|\geq 8$.  Under Rule 3, $f$ sends charge at most $|f|(\frac{1}{3} + \gamma)$ to 3-vertices.  Under Rule 4, $f$ sends charge at most $|f|\frac{3\gamma}{4}$ to 4-vertices.  Since $\gamma \leq \frac{1}{15}$, $f$ sends charge at most

$$|f|\left(\frac{1}{3} + \gamma + \frac{3\gamma}{4}\right) \leq \frac{27|f|}{60}  < \frac{1}{2}|f|.$$
Hence, $ch_*(f) \geq |f| - 4 - \frac{|f|}{2} = \frac{|f|}{2} - 4 \geq 0,$ as desired.

Suppose $5 < |f| < 8$.  By Corollary \ref{path bt 3-vertices}, since $\epsilon\leq\frac{1}{2}$, $G$ does not contain adjacent 3-vertices.  Therefore $f$ is incident to at most $\lfloor \frac{|f|}{2}\rfloor$ 3-vertices.  Since $G$ is triangle-free and $|f| < 8$, each 3-vertex appears at most once in the boundary walk of $f$.  Hence, $f$ sends charge at most $\frac{|f|}{2}\left(\frac{1}{3} + \gamma\right)$ to 3-vertices under Rule 3.  Under Rule 4, $f$ sends charge at most $|f|\frac{3\gamma}{4}$ to 4-vertices.  Therefore $f$ sends charge at most

$$\frac{|f|}{2}\left(\frac{1}{3} + \gamma\right) + |f|\frac{3\gamma}{4} = |f|\left(\frac{1}{6} + \frac{\gamma}{2} + \frac{3\gamma}{4}\right) = |f|\left(\frac{2 + 15\gamma}{12}\right).$$
Since $\gamma \leq \frac{1}{15}$, $f$ sends at most $\frac{|f|}{4}$ charge.  Hence, $ch_*(f) \geq |f| - 4 - \frac{|f|}{4} = \frac{3|f|}{4} - 4 \geq 0,$ as desired.

Suppose $|f| = 5$.  Since $G$ is triangle-free, each vertex appears at most once in the boundary walk of $f$.  If $f$ is not incident to any 3-vertices, then $f$ sends charge at most $5(\frac{3\gamma}{4}) \leq\frac{1}{4}$ under Rules 3 and 4, so $ch_*(f)\geq0$, as desired.  If $f$ is incident to precisely one 3-vertex, then $f$ sends charge at most $\frac{1}{3} + \gamma + 4(\frac{3\gamma}{4}) = \frac{1}{3} + 4\gamma \leq \frac{3}{5}$ under Rules 3 and 4, as desired.  If $f$ is incident to precisely two 3-vertices, then $f$ sends charge at most $\frac{2}{3} + 2\gamma + 3(\frac{3\gamma}{4}) = \frac{2}{3} + \frac{17\gamma}{4} \leq \frac{57}{60}$ under Rules 3 and 4, as desired.  Since $\epsilon\leq\frac{1}{2}$, $G$ does not contain adjacent 3-vertices by Corollary \ref{path bt 3-vertices}.  Hence, $f$ is incident to at most two 3-vertices, so the proof is complete.
\end{proof}

By Claims \ref{degree at most 4}, \ref{degree at least 7}, \ref{degree 6}, \ref{degree 5}, and 
\ref{X major}, if $\gamma\leq \frac{1}{15}$, then for all $v\in V(G)$, $ch_*(v) \geq \min\{3\gamma, \frac{2}{3} - \frac{91\gamma}{4}, \frac{1}{3} - \frac{53\gamma}{4}\}$.  So if $\gamma = \frac{4}{195}$, then $ch_*(v) \geq \frac{4}{65}$ for all $v\in V(G)$, and by Claim \ref{faces}, for all $f\in F(G)$, $ch_*(f)\geq0$.  Therefore

$$\frac{4}{65}|V(G)| \leq \sum_{v\in V(G)}ch_*(v) + \sum_{f\in F(G)}ch_*(f) = \left(\frac{199}{65}\right)|V(H)|+ 4(2g - 2).$$
Hence,

$$|V(G)| \leq \frac{199}{4}|V(H)| + 65(2g-2) \leq 50\left(|V(H)|-\frac{13}{5}\right) + 130g,$$
as desired.
\end{proof}

%%%%%%%%%%%%%%%%%%%%%%%%%%%%%%%%%%%%%%%%%%%%%%%%%%%%%%%%%

%\bibliography{exp-many-4-list-col-tri-free}{}
%\bibliographystyle{plain}

%%%%%%%%%%%%%%%%%%%%%%%%%%%%%%%%%%%%%%%%%%%%%%%%%%%%%%%%%

\end{document}